\newcounter{prop}
\newcounter{def}
\newtheorem{thm}{Theorem}
\newtheorem{cor}{Corollary}
\newtheorem{lem}{Lemma}
\newtheorem{prop}[prop]{Proposition}
\newtheorem{defi}[def]{Definition}
\newcommand{\norm}[2][]{\| \, {#2} \,\|_{#1}}
\newcommand{\abs}[1]{|{#1}|}
\newcommand{\NN}{\ensuremath{\mathbb{N}}}
\newcommand{\ZZ}{\ensuremath{\mathbb{Z}}}
\newcommand{\field}[1]{\mathbb{#1}}
\newcommand{\bZ}{\field{Z}}        
\def\zd{\bZ^d}
\newcommand{\cA}{\mathcal{A}}
\newcommand{\cB}{\mathcal{B}}
\newcommand{\cL}{\mathcal{L}}
\newcommand{\fif}{if and only if}
\newcommand{\spr}[2][]{sp_{#1}(#2)}
\newcommand{\projtensor}{\hat{\otimes}}
\newcommand{\conv}{\star}
\newcommand{\card}[1]{\mbox{card}({#1})}
\newcommand{\sprad}[2][]{r_{#1}(#2)}
\newcommand{\w}{\ensuremath{\omega}}
\newcommand{\grs}{\ensuremath{\mbox{GRS}}}
\newcommand{\ugrs}{\ensuremath{\mbox{UGRS}}}
\newcommand{\lw}[2]{\ensuremath{\ell ^1({#1}{#2})}}
\begin{document}

\begin{abstract}
We study infinite matrices $A$ indexed by a discrete group $G$ that are
dominated by a convolution operator in the sense that $|(Ac)(x)| \leq
(a \ast |c|)(x)$ for $x\in G$ and some $a\in \ell ^1(G)$. This class
of ``convolution-dominated'' matrices forms a Banach-$*$-algebra
contained in the algebra of bounded  operators on $\ell ^2(G)$. Our
main result shows that the inverse of a convolution-dominated matrix
is again convolution-dominated, provided that $G$ is amenable and
rigidly symmetric. For abelian groups this  result goes
back to Gohberg, Baskakov, and others, for non-abelian groups
completely different techniques are required, such as generalized 
 $L^1$-algebras and the symmetry of group algebras.
\end{abstract}

\title[Convolution-Dominated Operators]{ Convolution-Dominated
  Operators on Discrete Groups}
\subjclass[2000]{Primary: 47B35; Secondary: 43A20}

\author{Gero Fendler}
\address{Finstertal 16, D-69514 Laudenbach, Germany}
\email{gero.fendler@t-online.de}
\author{Karlheinz Gr\"ochenig}
\address{Fakult\"at f\"ur Mathematik, Universit\"at Wien, Nordbergstrasse 15,
  A-1090 wien, Austria}
\email{karlheinz.groechenig@univie.ac.at}
\author{Michael Leinert}
\address{Institut f\"ur Angewandte Mathematik, Universit\"at Heidelberg,
Im Neuenheimer Feld 294, D-69120 Heidelberg, Germany}
\email{leinert@math.uni-heidelberg.de}
\date{\today}
\keywords{Groups of polynomial growth, convolution, symmetric Banach
  algebras, inverse-closed, generalized $L^1$-algebra}
\thanks{K.~G.~was supported by the Marie-Curie Excellence Grant MEXT-CT 2004-517154}
\maketitle

\section{Introduction}

Is the  off-diagonal decay of an infinite  matrix inherited by its inverse matrix?
This question arises in many problems in  numerical
analysis and approximation theory and its solution has many
applications in  frame theory and
pseudodifferential operators and wireless communications.
See~\cite{DMS84,gro04,gro06,jaffard90,Sjo95,str06} for a sample of papers.

The study of the  off-diagonal decay has two distinct facets, namely the
rate  of the off-diagonal decay and the nature of the underlying index
set.  Usually the 
index set is (a subset of)  $\zd $ and  the  focus is on obtaining
various forms of off-diagonal decay  
conditions. For instance, it is 
known that polynomial decay  and subexponential decay are
preserved under matrix inversion~\cite{jaffard90,GL04a}.  

In general,  the preservation of  off-diagonal decay
under inversion depends also  on the index set.  For instance, in the theory of Calder\`on-Zygmund
operators, the index set consists of all dyadic cubes. On this index
set the quality of   the off-diagonal decay is not necessarily
preserved, and as a consequence the inverse of a Calder\`on-Zygmund
operator need not be a Calder\`on-Zygmund
operator~\cite{meyer-2,Tch86}.  

Thus the interaction between the precise form of off-diagonal decay
and the index set plays a decisive role.  This observation is  
implicit in ~\cite{jaffard90,GL04a,Sun07a}.  In~\cite{jaffard90,GL04a} it
was mentioned (without explicit proof)  that polynomial or
subexponential decay are preserved under inversion whenever  the index set of 
the matrix class  possesses a metric with a polynomial growth
condition.

We study the interaction between the decay conditions and the index
set in the context of non-commutative harmonic analysis. Precisely,
the index set will be a discrete (non-Abelian) group, e.g.,~a finitely
generated discrete group of polynomial growth.   We then 
investigate the class of convolution-dominated matrices, which are
described by a specific  type of off-diagonal decay. Convolution-dominated
matrices over the index set $\zd $  were introduced by Gohberg,
Kashoeck, and Woerdeman~\cite{GKW89} 
as a generalization of Toeplitz matrices, and they showed that this
class of matrices was closed under inversion. Similar results and
generalizations were obtained independently by Kurbatov~\cite{kur90},
Baskakov~\cite{Bas90}. Sometime later  Sj\"ostrand~\cite{Sjo95} rediscovered
their results, gave a completely different proof, and used it  in the
context of a deep theorem about 
pseudodifferential operators.  

 We consider matrices  indexed
by a discrete group $G$:  every  operator on $\ell ^2(G) $ is
described by a matrix $A$ with entries $A(x,y), x,y\in G$ by the usual
action $(Ac)(x) = \sum _{y\in G} A(x,y) c(y)$ on a sequence $c\in \ell
^2(G)$. We will consider mostly groups of polynomial growth. A
finitely generated group is of polynomial growth, if there exists a
finite set $U\subseteq G$, such that $\bigcup _{n=1}^\infty U^n =G $
and $\mathrm{card} \, U^n \leq C n^D$ for some constants $C,D>0$. 
Our main theorem reads as follows. 

\begin{thm}
  \label{main}
Let $G$ be a discrete finitely generated group of polynomial
growth. If a matrix $A$ indexed by $G$ satisfies the off-diagonal
decay condition 
$|A(x,y)| \leq a(xy^{-1}) , x,y\in G$ for some $a\in \ell ^1(G)$ and $A$ is
invertible on $\ell ^2(G)$, then there exists  $b\in \ell ^1(G)$ such
that $|A^{-1}(x,y)| \leq b(xy^{-1}), x,y\in G$. 
\end{thm}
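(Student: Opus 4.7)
My plan is to realize the algebra $\mathcal{CD}(G)$ of convolution-dominated matrices as a generalized $L^1$-algebra in the sense of Leptin, to prove that this algebra is symmetric by exploiting the rigid symmetry of $\ell^1(G)$, and finally to deduce inverse-closedness in $B(\ell^2(G))$ from a standard spectral-invariance argument. The first step is bookkeeping: $\mathcal{CD}(G)$ is a Banach $*$-algebra under the norm $\|A\| = \|\tilde a\|_1$, where $\tilde a(g) = \sup_{x\in G} |A(x, g^{-1}x)|$ is the optimal dominating sequence, because $|(AB)(x,y)| \leq (\tilde a \star \tilde b)(xy^{-1})$ and the $*$-operation corresponds to $a \mapsto a^*(g) = \overline{a(g^{-1})}$. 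Decomposing $A$ along its diagonals, $A = \sum_{g \in G} D_g L_g$ with $D_g(x) = A(x, g^{-1}x) \in \ell^\infty(G)$ and $L_g$ left-translation on $\ell^2(G)$, identifies $\mathcal{CD}(G)$ with the twisted convolution algebra $\ell^1(G, \ell^\infty(G))$ for the left-translation action of $G$ on $\ell^\infty(G)$, i.e., with a covariance/generalized $L^1$-algebra.

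The second and hardest step is to prove that this generalized $L^1$-algebra is symmetric. Here I would invoke the two essential facts about a finitely generated group $G$ of polynomial growth: (i) $G$ is amenable, and (ii) $\ell^1(G)$ is \emph{rigidly symmetric}, meaning $\ell^1(G) \projtensor B$ is symmetric for every $C^*$-algebra $B$ (a property for polynomial-growth groups due to Poguntke and coworkers). Since $\ell^\infty(G)$ is a commutative $C^*$-algebra, rigid symmetry of $\ell^1(G)$ combined with amenability can be transported through the covariance-algebra construction to yield symmetry of $\ell^1(G, \ell^\infty(G))$. The delicate point is the translation between the projective tensor-product framework in which rigid symmetry is naturally formulated and the twisted convolution structure of the generalized $L^1$-algebra; this is what replaces the Fourier-analytic Wiener-type argument that works on the dual group in the abelian case.

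Once symmetry of $\mathcal{CD}(G)$ is established, a Hulanicki-type spectral invariance theorem closes the argument: in a symmetric Banach $*$-algebra faithfully represented on a Hilbert space, the spectral radius of a self-adjoint element coincides with its operator norm, which forces the algebra to be inverse-closed in $B(\ell^2(G))$. Applied here, this yields that any $A \in \mathcal{CD}(G)$ invertible on $\ell^2(G)$ has $A^{-1} \in \mathcal{CD}(G)$, i.e., $|A^{-1}(x,y)| \leq b(xy^{-1})$ for some $b \in \ell^1(G)$, proving the theorem. The main obstacle is undoubtedly the symmetry step: for abelian $G$ it collapses to Wiener-type analysis on the dual, but for non-abelian polynomial-growth groups one must genuinely exploit the rigid symmetry of $\ell^1(G)$, which is precisely the ``completely different techniques'' flagged in the abstract.
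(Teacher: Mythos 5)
Your overall architecture coincides with the paper's: decompose a matrix along its side-diagonals to identify $CD(G)$ with Leptin's twisted algebra $\mathcal{L}=\ell ^1(G,\ell ^{\infty}(G),T)$, embed $\mathcal{L}$ isometrically as a closed $\ast$-subalgebra of $\ell ^1(G)\,\projtensor\, B(\ell ^2(G))$ to import rigid symmetry, and finish with a Hulanicki-type spectral-invariance argument. However, your final step rests on a false general principle, and this is a genuine gap. It is \emph{not} true that in a symmetric Banach $\ast$-algebra faithfully represented on a Hilbert space the spectral radius of a self-adjoint element equals its operator norm in that representation. Symmetry together with Pt\'ak's theorem \cite{ptak70} only gives $\sprad[\mathcal{L}]{f^{\ast}f}=\norm[\ast]{f^{\ast}f}$, where $\norm[\ast]{\,\cdot\,}$ is the \emph{largest} $C^{\ast}$-seminorm on $\mathcal{L}$; the operator norm of the canonical representation $R$ on $\ell ^2(G)$ could a priori be strictly smaller than $\norm[\ast]{\,\cdot\,}$ while $R$ remains faithful --- exactly the phenomenon one sees for group algebras of non-amenable groups, where the reduced norm is strictly smaller than the universal $C^{\ast}$-norm. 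So faithfulness buys you nothing here: you must prove that the concrete representation $R$ attains the maximal $C^{\ast}$-norm, and without that your Hulanicki step does not apply.

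This norm identification is where the paper spends its effort in Section~4, and it is where amenability actually enters --- not in the symmetry step, as your sketch suggests (the embedding $Q$ plus rigid symmetry yield symmetry of $\mathcal{L}$ and $CD(G)$ with no amenability hypothesis at all). Concretely, the paper first shows (Proposition~\ref{prop:regular}) that the $D$-regular representation $\lambda ^D$ of $\mathcal{L}$ on $\ell ^2(G\times G)$ is unitarily equivalent, via $S\xi (x,z)=\xi (xz,z)$, to a multiple of the canonical representation $R$, so that $\norm{\lambda ^D(f)}=\norm[B(\ell ^2(G))]{R(f)}$; it then invokes Leptin's theorem \cite[Satz 6]{lep68}, valid precisely for amenable $G$, that $\lambda ^D$ defines the largest $C^{\ast}$-norm on $\mathcal{L}$. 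Combining these with Pt\'ak gives $\sprad[\mathcal{L}]{f^{\ast}f}=\norm[B(\ell ^2(G))]{R(f)}^2$ (Proposition~\ref{prop:normid}), which is the hypothesis that the Hulanicki lemma of \cite{fglm} requires, and inverse-closedness in $B(\ell ^2(G))$ then follows as you intend. Your proposal becomes a correct rendition of the paper's proof once you insert this intertwining-plus-amenability argument; as written, the passage from symmetry to inverse-closedness is unjustified.
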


We will extend this result and also  consider the situation  where $\ell
^1(G)$ is replaced by the weighted algebra $\ell ^1(G,\omega )$ for
certain weight functions on $G$. This weighted case is easier and 
follows from Theorem~\ref{main} by standard methods.

To put Theorem~\ref{main} into a bigger context, let us  consider the 
case $A(x,y) = a(xy^{-1}) $ for a sequence $a\in \ell ^1(G)$. This 
matrix $A$ corresponds to the convolution operator $Ac = a \ast c$ on
$\ell ^2(G)$.  Even this case is highly
non-trivial. Theorem~\ref{main} implies  the symmetry of the group
algebra $\ell ^1(G)$, i.e., the spectrum of positive elements $a^*
\ast a $ is contained in $[0,\infty )$ for all $ a\in \ell
^1(G)$. This fact is of course well known, but its proof requires the
combination of two landmark results of harmonic analysis, namely  
Gromov's characterization of finitely 
generated groups of polynomial growth as finite extensions of
nilpotent groups and Hulanicki's result that discrete  nilpotent
groups are  symmetric~\cite{grom81,hul72}.

Convolution-dominated matrices on groups of polynomial growth occur
implicitly in Sun's remarkable work~\cite{Sun07a}. His conditions on the
off-diagonal decay are somewhat complicated and exclude the basic case
of $\ell ^1$-decay. In view of the relation with the symmetry of
groups, this omission is not surprising.  

If $G$ is a discrete Abelian group $G$, the proof of the main theorem is
based on an idea of de Leeuw~\cite{Lew75}: to every operator on $G$ one can
assign an operator-valued Fourier series and then classical Fourier
series arguments, such as Wiener's Lemma, can be applied. 
   This approach is championed in~\cite{Bas90,GKW89}. 

For a non-Abelian group as the  index set, these ideas break down
completely, and a new approach  is required. Our key idea is to
replace  the Fourier series  arguments by methods taken from 
Leptin's investigation of generalized
$L^1$-algebras~\cite{lep67,lep67a,lep68}. The main insight 
is that  the algebra of convolution-dominated
matrices can be identified with a generalized $L^1$-algebra in the
sense of Leptin.   
This observation  allows us to translate the original problem
about matrix inversion  into a problem of abstract harmonic
analysis. The analysis of generalized $L^1$-algebras was  advanced by
Leptin and Poguntke~\cite{lep67,lep67a,lep68,LP79}  
and has produced deep results. In fact, we will 
resort to their  representation theoretic results and to   the
concept of the ``rigid symmetry'' of Banach algebras and 
apply these  at a crucial point.

 The relation between a ``simple'' matrix problem and the theory
of generalized $L^1$-algebras may seem surprising at first glance, but
it is  exactly this connection that allows us to use the power of
non-commutative harmonic analysis  to solve the problem. 

Let us mention that a similar theory can be established for
convolution-domi\-nated integral operators.  This generalization  
is more technical  and will be dealt with in a subsequent paper.

The paper is organized as follows: in Section~2 we give a formal
definition of the algebra of convolution-dominated operators and
identify it as a generalized $L^1$-algebra. In Section~3 we prove the
symmetry of this algebra, and in Section~4 we treat the related
concept of inverse-closedness. In particular, we prove
Theorem~\ref{main}. In Section~5 we treat the weighted case and
characterize all weights for which the generalized weighted
$L^1$-algebra is symmetric.

\textbf{Acknowledgement:} We  would like to thank Marc Rieffel for
his useful comments and questions on an early draft of the paper. 

\section{The Algebra of Convolution-Dominated Operators as a Twisted
  $L^1$-Algebra}\label{sec:2}
Let $G$ be a discrete group. 
For $x\in G$ we  denote the operator of left translation
 on $\ell ^1(G)$ and on $\ell ^2(G)$ by $\lambda(x)$, i.e. if $f\in
 \ell ^1(G)$ or  $f\in \ell ^2(G)$, then   
$\lambda(x)f\,(y) = f(x^{-1}y),\; x,y\in G$. By $\mathcal{B}(\ell ^2(G))$ we denote the algebra of bounded
 operators on $\ell ^2(G)$.
\par
For an operator $A: \ell ^2(G) \to \ell ^2(G)$ 
let
$A(x,y) = <A\delta_y,\delta_x>,\;x,y\in G$,  
be its matrix,
where $<~,~>$ is the inner  product of the Hilbert space $\ell ^2(G)$
and $\delta _x(x) = 1$ and $\delta _x(z) = 0$ for $z\neq x$. 

\begin{defi} The operator $A$ is called convolution-dominated, in
  short notation $A\in
  CD(G)$,  if there exists a sequence 
  $a\in \ell ^1(G)$ such that 
\[
\abs{A(x,y)} \leq a(xy^{-1}), \quad \forall x,y\in G.
\]
We define the  norm of $A$  as an element in $CD(G)$ by 
\[
\norm[1]{A} := \inf \{ \norm[\ell ^1]{a} : a\in \ell ^1(G),\; \abs{A(x,y)} \leq
a(xy^{-1})\;\forall x,y\in G\}.
\]
\end{defi}
 By choosing $a(z)$ to be  the supremum of the entries
of $A$ on the $z$-th diagonal, namely  $a(z) =
\sup_{\{x,y:\,xy^{-1}=z\}} \abs{A(x,y)} $,  we see that 
\[
\|A\|_1 = \sum_{z\in G} \quad \sup_{\{x,y:\,xy^{-1}=z\}} \abs{A(x,y)} \; < \;
\infty.
\]

To shed light on this definition, consider the action of the operator
or matrix $A$ on a finitely supported vector $c$ and take absolute
values: 
\begin{equation}
  \label{eq:cc28}
|(Ac)(x)| = | \sum _{y\in G} A(x,y) c(y)| \leq \sum _{y\in G}
a(xy^{-1}) \, |c(y)| = (a \ast |c|)(x) \, .
\end{equation}
Thus $A$ is dominated pointwise by a convolution operator,
whence our terminology. Clearly, if $a \in \ell ^1(G)$, i.e., if $A\in
CD(G)$, then $A$ is bounded on $\ell ^2(G)$, and the operator norm on
$\ell ^2(G)$, in fact on all $\ell ^p(G), 1\leq p \leq \infty$,   is
majorized by the $\|A\|_1$-norm. If we consider the composition of two
convolution-dominated operators $A$ and $B$, then we obtain similarly
$$
|(ABc)(x)| \leq (a \ast b \ast |c|)(x) \, ,
$$
and therefore the operator $AB $ is again convolution-dominated and we
obtain that $\|AB\|_1 \leq \|A \|_1 \, \|B\|_1$, because  $\ell ^1(G)$
is a convolution algebra. 
We may  summarize these observations  as  follows.

\par
\begin{lem}
The space $  CD(G)$ is a Banach $*$-algebra with respect to composition of
operators and taking the adjoint operator as involution. Moreover,
$CD(G)$ is continuously embedded into $\mathcal{B}(\ell ^2(G))$.  
\end{lem}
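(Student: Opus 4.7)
The plan is to verify in turn that $\norm[1]{\cdot}$ is a submultiplicative norm, that the adjoint operation is an isometry on $CD(G)$, that there is a continuous embedding into $\mathcal{B}(\ell^2(G))$, and that $(CD(G),\norm[1]{\cdot})$ is complete.

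Two of these items are already essentially addressed in the discussion preceding the lemma. The pointwise inequality $|(ABc)(x)|\le(a\ast b\ast|c|)(x)$, together with the fact that $\ell^1(G)$ is a convolution algebra, gives $AB\in CD(G)$ with $\norm[1]{AB}\le\norm[1]{A}\norm[1]{B}$; Young's inequality $\norm[2]{a\ast c}\le\norm[\ell^1]{a}\norm[2]{c}$ applied to \eqref{eq:cc28} yields the embedding with $\norm[\mathcal{B}(\ell^2(G))]{A}\le\norm[1]{A}$. For the involution, $A^*(x,y)=\overline{A(y,x)}$, so any dominator $a$ of $A$ produces a dominator $\check a(z):=a(z^{-1})$ of $A^*$ with the same $\ell^1$-norm; taking infima and using $A^{**}=A$ gives $\norm[1]{A^*}=\norm[1]{A}$. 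The observation in the excerpt that the infimum defining $\norm[1]{A}$ is attained at $a(z):=\sup_{xy^{-1}=z}|A(x,y)|$ reduces the norm axioms (positive definiteness, homogeneity, triangle inequality) directly to those of $\ell^1(G)$.

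The only item requiring genuine work is completeness. Given a Cauchy sequence $(A_n)$ in $\norm[1]{\cdot}$, I would introduce the sharp dominators $a_n(z):=\sup_{xy^{-1}=z}|A_n(x,y)|$ and the sharp dominators of differences $a_{n,m}(z):=\sup_{xy^{-1}=z}|A_n(x,y)-A_m(x,y)|$, which satisfy $\norm[\ell^1]{a_{n,m}}=\norm[1]{A_n-A_m}\to 0$. The pointwise inequality $|a_n(z)-a_m(z)|\le a_{n,m}(z)$ shows that $(a_n)$ is itself Cauchy in $\ell^1(G)$ and converges to some $a\in\ell^1(G)$. Simultaneously, $|A_n(x,y)-A_m(x,y)|\le\norm[1]{A_n-A_m}$ makes each matrix entry Cauchy in $\mathbb C$ with some pointwise limit $A(x,y)$; passing to the limit in $|A_n(x,y)|\le a_n(xy^{-1})$ gives $|A(x,y)|\le a(xy^{-1})$, so $A\in CD(G)$. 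Finally, letting $m\to\infty$ pointwise in $|A_n(x,y)-A_m(x,y)|\le a_{n,m}(xy^{-1})$ and then taking the diagonal supremum yields $\norm[1]{A_n-A}\to 0$. I expect no serious obstacle; the only mild subtlety is the interchange of the diagonal supremum with a pointwise limit in the completeness argument, which is automatic since the supremum is a pointwise operation on a countable set.
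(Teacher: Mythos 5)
Your verification is correct, and for the two assertions the paper actually argues --- submultiplicativity via $|(ABc)(x)|\le (a\ast b\ast|c|)(x)$ and the continuous embedding via \eqref{eq:cc28} together with Young's inequality --- you follow exactly the paper's path: the lemma there is stated as a summary of those observations, with no separate proof given. Where you genuinely diverge is on the remaining Banach-$\ast$-algebra structure (isometry of the involution, completeness), which the paper never checks directly; these facts are implicit in the next section, since Proposition~\ref{p:1} exhibits an isometric $\ast$-isomorphism $R$ of $\ell ^1(G,\ell ^{\infty}(G),T)$ onto $CD(G)$, and $\ell ^1(G,E)=\ell ^1(G)\projtensor E$ is complete with isometric involution, so $CD(G)$ inherits both. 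Your hands-on route via the sharp dominators $a(z)=\sup_{xy^{-1}=z}|A(x,y)|$ and $\check a(z)=a(z^{-1})$ is more elementary and makes the lemma self-contained, independent of the generalized $L^1$-machinery; the paper's route buys completeness structurally, but only after the computations of Section~\ref{sec:2}.

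One small repair in your completeness step: the parenthetical claim that interchanging the diagonal supremum with the limit in $m$ is ``automatic'' because the supremum is taken over a countable set is not right as stated --- suprema and limits do not commute in general, and countability is irrelevant. What you need, and what does hold, is the one-sided bound $\sup_{xy^{-1}=z}|A_n(x,y)-A(x,y)|\le \liminf_{m\to\infty} a_{n,m}(z)$, followed by Fatou's lemma in the variable $z$; equivalently, observe $|a_{n,m}(z)-a_{n,m'}(z)|\le a_{m,m'}(z)$, so for fixed $n$ the sequence $(a_{n,m})_m$ converges in $\ell ^1(G)$ to some $c_n$ which dominates $A_n-A$ and satisfies $\norm[\ell ^1]{c_n}=\lim_{m\to\infty}\norm[1]{A_n-A_m}\to 0$ as $n\to\infty$. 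With that one line inserted, your argument is complete and correct.
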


\par

Our first goal is to represent $CD(G)$ as a generalized $L^1$-algebra
in the sense of Leptin~\cite{lep67}. Consider the  $C^{\ast}$-algebra  $\ell ^{\infty}(G)$ with pointwise
multiplication and complex conjugation as involution. This algebra is  
isometrically represented as an algebra of  multiplication operators on $\ell ^2(G)$ 
by 
\[D^mf(x)=m(x)f(x),\; \mbox{ where }x\in G, f\in \ell ^2(G) ,m\in \ell
^{\infty}(G).\] 
 Analogously, we define an operator $D^m_z$ by 
\[D^m_z = \lambda(z)\circ D^m.\]
As is easily seen, the matrix of $D^m_z$ has the entries 
\begin{equation}
  \label{eq:c1}
  D^m_z(x,y)=m(y)\delta_z(xy^{-1})\, .
\end{equation}
Whereas the matrix of the multiplication operator $D^m$ is a diagonal
matrix, the matrix of $D^m_z$ is non-zero only on the $z$-th
side-diagonal. Since   every matrix can be written  as the  sum of its 
side-diagonals, every operator is  a sum of the
elementary operators $D^m_z$. This simple observation is crucial for
the analysis of convolution-dominated operators. 

Next we study  how the operators $D^m_z$  behave under composition: if
$v,w \in G$ and  $m,n\in \ell ^\infty (G)$, then 
\begin{eqnarray}
(D^n_v\circ D^m_w)(x,y) &=& \sum _{z\in G} D^n_v(x,z)D^m_w(z,y) \notag\\
&=&\sum _{z\in G} n(z) \delta_v (xz^{-1}) m(y) \delta_w (zy^{-1}) \notag\\
&=&\sum _{z\in G} n(zy) \delta_v (xy^{-1}z^{-1}) m(y) \delta_w (z) \label{eq:cc5}\\
&=&n(wy)  m(y) \delta_v (xy^{-1}w^{-1}) \notag \\
&=&n(wy)  m(y) \delta_{vw} (xy^{-1})\notag \\
&=& D^{(T_{w^{-1}}n)\, m}_{vw}(x,y) \, . \notag
\end{eqnarray}
In the last equality  we have set  $T_y n (z) = n(y^{-1}z)$ for $ n\in
\ell ^{\infty}(G)$.  We use a  notation different from $\lambda $,
because $T_y :\ell ^{\infty}(G) \to \ell ^{\infty}(G)$ is a  $C^\ast$
automorphism of the algebra $\ell ^{\infty}(G)$ and  the mapping
$y\mapsto T_y$ defines  a homomorphism of the group $G$ into the group
of 
$C^\ast$-automorphisms of $\ell ^{\infty}(G)$.
Using  this homomorphism,  we may now  form the twisted $L^1$-algebra
$\mathcal{L}=\ell ^1(G,\ell ^{\infty}(G),T)$ in
the sense of Leptin~\cite{lep67,lep67a,lep68}. The  underlying Banach
space of $\mathcal{L}$ is  the space of $\ell ^{\infty}(G)$-valued absolutely
summable sequences on $G$, but we will often interpret it  as the projective tensor
product 
\[\ell ^1(G,\ell ^{\infty}(G)) =  \ell ^1(G) \, \projtensor \,  \ell ^{\infty}(G).\]
Thus for an element $f\in \ell ^1(G,\ell ^{\infty}(G))$ we denote its value in
$\ell ^{\infty}(G)$ by $f(x)$, $ x\in G$, and we  write $f(x)(z)$ or $f(x,z)$ for 
the value of this $\ell ^{\infty}$-function at $z\in G$.

The twisted convolution of $h,f \in \mathcal{L}$  is defined  by
\[
h \conv f (x) =  \sum_{y \in G} T_y h(xy) f(y^{-1}), \;\mbox{ for } x\in G \, ,
\]
and the  involution of $h\in \mathcal{L}$  by
\[
h^{\ast} (x) = \overline{T_{x^{-1}} h(x^{-1})}, \;\mbox{ for } x\in
G\, .
\]
An element $f\in \mathcal{L} $ may be  represented uniquely as
\[f = \sum_{z\in G}m_z\delta_z,\]
where $m_z=f(z)\in \ell ^{\infty}(G)$. By using $m_z$ as the $z$-th
side-diagonal of a matrix,  we define a map 
\begin{equation}\label{eq:R}
R:\ell ^1(G,\ell ^{\infty}(G),T) \to B(\ell ^2(G))
\end{equation}
by
\begin{equation}
  \label{eq:cc6}
Rf = \sum _{z\in G} D^{m_z}_z.
\end{equation}
\begin{prop}\label{p:1} The map
$R:\ell ^1(G,\ell ^{\infty}(G),T) \to CD(G)$ is an iso\-metric $\ast$-isomorphism.
\end{prop}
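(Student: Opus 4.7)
The plan is to verify in order that $R$ is isometric, bijective, multiplicative, and preserves the involution, by unwinding the definitions and reading everything off the matrix entries of $Rf$. First I would compute the matrix of $Rf$ directly from the given formula for $D^m_z$, obtaining $Rf(x,y) = m_{xy^{-1}}(y)$. Thus the function $m_z$ is precisely the $z$-th side-diagonal of $Rf$, and taking the supremum along this diagonal picks out $\|m_z\|_\infty$ (as $y$ ranges over $G$, the values $m_z(y)$ exhaust $m_z$). Summing these suprema over $z$ yields $\|Rf\|_1 = \sum_z \|m_z\|_\infty = \|f\|_{\mathcal{L}}$, which simultaneously gives boundedness, isometry, and injectivity. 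For surjectivity, given any $A \in CD(G)$, I would set $m_z(y) := A(zy,y)$; the norm identity above shows $\sum_z \|m_z\|_\infty = \|A\|_1 < \infty$, so $f := \sum_z m_z \delta_z$ lies in $\mathcal{L}$ and satisfies $Rf = A$ by the entry formula.

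For multiplicativity, I would expand $Rh \circ Rf$ using the side-diagonal decompositions $h = \sum_v n_v \delta_v$ and $f = \sum_w m_w \delta_w$, and apply the composition identity $D^n_v \circ D^m_w = D^{(T_{w^{-1}} n)\, m}_{vw}$ already established in the preceding computation. The bound $\|D^m_z\|_1 \leq \|m\|_\infty$ ensures absolute convergence in $CD(G)$, so the resulting double sum may be rearranged and grouped by $u := vw$. The coefficient on the $u$-th side-diagonal becomes $\sum_{vw = u}(T_{w^{-1}} n_v)\, m_w$, and the change of variable $y := w^{-1}$, $v := uy$ rewrites this as $\sum_y T_y n_{uy}\, m_{y^{-1}}$, which is precisely $(h \star f)(u)$ by the definition of twisted convolution. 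Hence $Rh \circ Rf = R(h \star f)$.

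For the involution, a direct matrix-entry comparison suffices: the $(x,y)$-entry of $(Rh)^*$ is $\overline{Rh(y,x)} = \overline{n_{yx^{-1}}(x)}$, while $R(h^*)(x,y) = h^*(xy^{-1})(y) = \overline{T_{yx^{-1}}\, n_{yx^{-1}}(y)} = \overline{n_{yx^{-1}}(x)}$, so the two agree. The only step carrying any real content is the reindexing in the multiplicativity argument, where the main subtlety is ensuring that the automorphism $T_{w^{-1}}$ lands on the correct factor after regrouping side-diagonals; everything else is a transparent translation of the definitions of $CD(G)$, twisted convolution, and the elementary operators $D^m_z$.
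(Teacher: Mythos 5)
Your proposal is correct and follows essentially the same route as the paper's proof: the side-diagonal decomposition with the norm identity $\|Rf\|_1=\sum_z\|m_z\|_\infty$, the precomputed composition rule $D^n_v\circ D^m_w=D^{(T_{w^{-1}}n)\,m}_{vw}$ regrouped over $vw=u$ to match the twisted convolution, and a matrix-entry verification of the involution. Your only additions are welcome refinements: you make surjectivity explicit via $m_z(y):=A(zy,y)$ (which the paper dispatches in one line) and you justify the rearrangement by absolute convergence; you also state the multiplicativity in the correct order $Rh\circ Rf=R(h\star f)$, silently correcting a labeling slip in the paper's displayed computation.
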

\begin{proof}
Let  $f=\sum _{z\in G}m_z\delta_z$ and $h=\sum _{z\in G} n_z\delta_z
\in \cL$. By \eqref{eq:c1}  
we have
\begin{eqnarray*}
\|Rf\|_{CD} &=& \norm[1]{\sum _{z\in G} D^{m_z}_z} \\
 &=& \sum _{z\in G} \sup_{\{x,y : xy^{-1}=z\}} |D^{m_z}_z(x,y)| \\
&=& \sum _{z\in G} \sup_y \abs{m_z(y)} \;=\; \norm[\ell ^1(G,\ell ^{\infty}(G))]{f}.
\end{eqnarray*}
Thus $R$ is an isometry. 

The  twisted convolution of $f$ and $h$ may be computed as follows: 
\begin{eqnarray}
(h\conv f )(x,z) &=& \big[ \sum _{y\in G} T_y h(xy) f(y^{-1})\big](z)\notag \\
&=&\sum _{y\in G} h(xy,y^{-1}z)f(y^{-1},z) \notag\\
&=&\sum _{y\in G} n_{xy}(y^{-1}z) m_{y^{-1}}(z) \label{eq:cc7}\\
&=&\sum_v l_v(z)\delta_v(x) \notag
\end{eqnarray}
where 
$$
l_v= \sum _{y\in G} T_y n_{vy}\, m_{y^{-1}} \in \ell ^\infty (G) \, .
$$
By comparison, the composition of the corresponding   operators $Rf$
and $Rh$ (matrix  multiplication) yields that 
\begin{eqnarray*}
Rf \circ Rh &=& \sum_r D^{n_r}_r \circ \sum_w D^{m_w}_w \\
&=& \sum_{r,w} D^{(T_{w^{-1}}n_r)\,m_w}_{rw}
= \sum_v D^{l'_v}_v
\end{eqnarray*}
where 
\[
l'_v =\sum_{\{r,w:\, rw=v\}} T_{w^{-1}}n_r\,m_w = \sum _{y\in G}
T_{y}n_{vy}\,m_{y^{-1}}=l_v.
\]
Thus  $Rf \circ Rh = R(f \conv h )$ and $R$ is an algebra homomorphism.
\par
The involution of  $f$ as above is given by 
\begin{eqnarray*}
f^{\ast}(x,z)&=& \overline{T_{x^{-1}}m_{x^{-1}}(z)}\\
&=& \overline{m_{x^{-1}}(xz)}\\
&=&\sum_v l_v(z)\delta_v(x),
\end{eqnarray*}
where $l_v(z)=\overline{T_{v^{-1}}m_{v^{-1}}}(z)$.\\
By comparison, the adjoint of  a single side-diagonal operator is 
\begin{eqnarray*}
(D^{m_v}_v)^{\ast}(x,y)&=& \overline{D^{m_v}_v(y,x)}\\
&=& \overline{m_v (x)}\delta_v(yx^{-1})\\
&=&\overline{m_v (x)}\delta_{v^{-1}}(xy^{-1})\\
&=&\overline{m_v (v^{-1}y)}\delta_{v^{-1}}(xy^{-1})\\
&=& D^{T_v \overline{m_v}}_{v^{-1}}(x,y)\, .
\end{eqnarray*}
These equalities imply that 
\begin{eqnarray*}
(\sum_v D^{m_v}_v)^{\ast}&=& \sum_v D^{T_v \overline{m_v}}_{v^{-1}}\\
&=&\sum_v D^{T_{v^{-1}} \overline{m_{v^{-1}}}}_{v}\;=\;R(f^{\ast})\, ,
\end{eqnarray*}
and so $R$ preserves the involution. Finally, from the definition of $\|A\|_1$
and the equalities~\eqref{eq:c1} and \eqref{eq:cc6} one sees that $R$ is onto. 
\end{proof}
\section{Symmetry of the Twisted $L^1$-Algebra}\label{sec:3}
Recall that a Banach algebra $A$ with isometric involution is called 
symmetric if the spectrum of  every positive element is   contained in the
non-negative reals, i.~e.~$\spr{a^{\ast}a}\subset [0,\infty)$ for
all $ a\in A$. For  various abstract characterizations of symmetry
see~\cite[Section~41]{BD73} or \cite{LP79}. 

Furthermore, a locally compact group $G$ is called symmetric,  if its
convolution algebra $L^1(G)$ is symmetric. Various classes of groups are known to be
symmetric: (a) locally compact  Abelian  groups, (b) compact groups,
(c) finite extensions of discrete
nilpotent groups, (d)  compactly generated groups of polynomial
growth, (e) compact extensions of locally compact nilpotent groups,
and others. See~\cite{Lep80}. For  the groups of the 
classes (a) --- (c)  Leptin and Poguntke~\cite{LP79} have  shown that
they   satisfy an even  stronger property, namely that  of rigid
symmetry. This means that 
for every  $C^{\ast}$-algebra $C$ the projective tensor product
$L^1(G)\projtensor C$ is symmetric. Later Poguntke~\cite{pog92} showed
that all nilpotent 
locally compact groups are rigidly symmetric.

Our goal is to show that 
the twisted $L^1$-algebra $\mathcal{L}=\ell ^1(G,\ell ^{\infty}(G),T)$
of  a rigidly symmetric discrete group $G$  is symmetric  and hence
that the  algebra of convolution-dominated operators $CD(G)$ is 
also symmetric.
\par
To this end we define a map 
\begin{equation}\label{eq:Q}
Q:\ell ^1(G,\ell ^{\infty}(G),T) \to \ell ^1(G) \projtensor B(\ell ^2(G))
\end{equation}
by 
\begin{equation}
  \label{eq:c23}
  f=\sum_v \delta_v \otimes m_v \mapsto \sum_v \delta_v \otimes
  D^{m_v}_v\, .
\end{equation}

\begin{prop}\label{p:2}
The  map $Q$ is an isometric $\ast$-iso\-mor\-phism of 
$\ell ^1(G,\ell ^{\infty}(G),T)$ onto a closed subalgebra of $\ell ^1(G) \projtensor
B(\ell ^2(G))$.
\end{prop}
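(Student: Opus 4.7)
The plan is to verify the three defining properties --- isometry, multiplicativity, and compatibility with the involution --- by direct computation paralleling the proof of Proposition~\ref{p:1}, and then to deduce closedness of the image from completeness of $\cL$. Throughout I rely on the standard isometric identification $\ell^1(G) \projtensor B(\ell^2(G)) \cong \ell^1\bigl(G, B(\ell^2(G))\bigr)$, under which an element $\sum_v \delta_v \otimes T_v$ has projective norm $\sum_v \|T_v\|_{B(\ell^2(G))}$; with this identification $Q(f)$ is simply the $B(\ell^2(G))$-valued sequence $(D^{m_v}_v)_{v \in G}$.

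For the isometry, I would first observe that $D^{m_v}_v = \lambda(v)\circ D^{m_v}$ is the composition of a unitary operator with a multiplication operator, so $\|D^{m_v}_v\|_{B(\ell^2(G))} = \|D^{m_v}\| = \|m_v\|_\infty$. Summing over $v$ yields $\|Q(f)\| = \sum_v \|m_v\|_\infty = \|f\|_{\cL}$, which gives both injectivity and the isometric bound.

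For multiplicativity, I use that the algebra structure on $\ell^1(G) \projtensor B(\ell^2(G))$ is determined by $(\delta_r \otimes A)(\delta_w \otimes B) = \delta_{rw} \otimes AB$. Expanding $Q(h) \cdot Q(f)$ and invoking the identity $D^{n_r}_r \circ D^{m_w}_w = D^{(T_{w^{-1}} n_r)\,m_w}_{rw}$ from \eqref{eq:cc5}, then reindexing by $v = rw$ and $y = w^{-1}$, I recover exactly the coefficients $l_v = \sum_y T_y n_{vy}\, m_{y^{-1}}$ that describe the twisted convolution $h \conv f$ as computed in \eqref{eq:cc7}; hence $Q(h) Q(f) = Q(h \conv f)$. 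For the involution, the tensor-product algebra satisfies $(\delta_v \otimes T)^* = \delta_{v^{-1}} \otimes T^*$, and applying the formula $(D^{m_v}_v)^* = D^{T_v \overline{m_v}}_{v^{-1}}$ established in the proof of Proposition~\ref{p:1} and reindexing $v \mapsto v^{-1}$ gives $Q(f)^* = \sum_v \delta_v \otimes D^{T_{v^{-1}} \overline{m_{v^{-1}}}}_v = Q(f^*)$, using that $\overline{T_{v^{-1}} m_{v^{-1}}} = T_{v^{-1}} \overline{m_{v^{-1}}}$ since translation commutes with complex conjugation.

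Finally, closedness of the image follows automatically: since $Q$ is an isometric $\ast$-homomorphism and $\cL$ is a Banach space, $Q(\cL)$ is complete in the norm inherited from $\ell^1(G) \projtensor B(\ell^2(G))$, hence closed. I do not foresee a substantial obstacle; the only point requiring slight care is the identification of the projective tensor norm with the $B(\ell^2(G))$-valued $\ell^1$-norm, which is a classical property of projective tensor products of $\ell^1$-spaces with an arbitrary Banach space. In effect, the proposition is a bookkeeping refinement of Proposition~\ref{p:1}: by retaining the group index $v$ as an explicit tensor factor rather than summing it into a single operator, one lands in a tensor-product algebra to which the rigid-symmetry machinery of Leptin--Poguntke can eventually be applied.
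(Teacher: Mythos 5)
Your proof is correct and takes essentially the same route as the paper's: both rest on the isometric identification $\ell^1(G,E)=\ell^1(G)\,\projtensor\,E$, verify multiplicativity by matching the side-diagonal composition rule \eqref{eq:cc5} against the twisted convolution coefficients $l_v$ from \eqref{eq:cc7}, check the involution via $(D^{m_v}_v)^{\ast}=D^{T_v\overline{m_v}}_{v^{-1}}$, and obtain closedness of the image from the isometry. Your only additions are minor explicit details the paper leaves implicit, such as $\|D^{m_v}_v\|_{B(\ell^2(G))}=\|m_v\|_{\infty}$ because $\lambda(v)$ is unitary.
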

\begin{proof}
The proof rests on the isometrical identification $\ell ^1(G,E)=\ell
^1(G)\, \projtensor \,  E$, which holds for any Banach space $E$~\cite[Ch.~VIII.1.]{DU77}. 
It follows that
for $f=\sum_v \delta_v\otimes m_v \in \mathcal{L}$
\begin{eqnarray*}
\norm[1]{f} &=& \sum_v \norm[\infty]{m_v}\; = \; 
\sum_v\norm[B(\ell ^2(G))]{D^{m_v}_v}\\
&=& \norm[\ell ^1(G)\projtensor B(\ell ^2(G))] {\sum_v \delta_v\otimes D^{m_v}_v}.
\end{eqnarray*}
Thus $Q$ is an isometry. 
Let $h=\sum_v \delta_v\otimes n_v$,  then  by \eqref{eq:cc7}
\[
h\conv f = \sum_v \delta_v\otimes l_v,
\]
where
$l_v= \sum _{y\in G} (T_y n_{vy} )m_{y^{-1}}$. Hence
\begin{eqnarray*}
Q(h\conv f) &=& \sum_v \delta_v\otimes D^{l_v}_v\\
&=&\sum_v \delta_v\otimes\sum_{\{z,w:zw=v\} }D^{n_z}_z D^{m_w}_w\\
&=& \sum_{z,w }\delta_z \delta_w \otimes D^{n_z}_z D^{m_w}_w\\
&=&(\sum _{z\in G} \delta_z \otimes D^{n_z}_z )(\sum_w\delta_w \otimes D^{m_w}_w)%
\;=\; Q(h)  Q(f)\, .
\end{eqnarray*}
Similarly one computes that $Q$ intertwines the involutions.
In fact
\begin{eqnarray*}
  Q(f)^{\ast}&=&\sum_v \delta_v^{\ast}\otimes (D^{m_v}_v)^{\ast}\\
&=&\sum_v \delta_{v^{-1}}\otimes D^{T_v \overline{m_v}}_{v^{-1}}\\
&=&\sum_{v^{-1}} \delta_{v}\otimes D^{T_{v^{-1}}
  \overline{m_{v^{-1}}}}_{v}  = Q(f^*) \, .
\end{eqnarray*}
Thus $Q$ is a $\ast $-homomorphism. Since $Q$ is an  isometry, the
image of $Q$ is a closed subalgebra of $\ell ^1 
\, \projtensor \, \cB (\ell ^2)$. 
\end{proof}
Since symmetry is inherited by   closed subalgebras,  we   obtain the
following consequence. 
\begin{cor}\label{cor:1}
Let $G$ be a discrete rigidly symmetric group. Then 
 $\ell ^1(G,\ell ^{\infty}(G),T)$ and $CD(G)$ are symmetric Banach $\ast$-algebras.
\end{cor}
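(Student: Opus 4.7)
The plan is to chain together the two isomorphism results (Propositions~\ref{p:1} and~\ref{p:2}) with the definition of rigid symmetry and the standard fact that symmetry is inherited by closed $*$-subalgebras. Explicitly: by Proposition~\ref{p:2}, the map $Q$ realizes $\mathcal{L} = \ell^1(G,\ell^\infty(G),T)$ as a closed $*$-subalgebra of the projective tensor product $\ell^1(G)\projtensor \cB(\ell^2(G))$. Since $\cB(\ell^2(G))$ is a $C^*$-algebra and $G$ is by assumption rigidly symmetric, the very definition of rigid symmetry gives that $\ell^1(G)\projtensor \cB(\ell^2(G))$ is a symmetric Banach $*$-algebra. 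Applying Proposition~\ref{p:1}, which identifies $CD(G)$ isometrically and $*$-iso\-mor\-phi\-cally with $\mathcal{L}$, transfers the conclusion from $\mathcal{L}$ to $CD(G)$.

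The only step that is not a direct invocation of a prior result is the passage from symmetry of the big algebra to symmetry of the closed $*$-subalgebra $Q(\mathcal{L})$. I would invoke the standard fact (see, e.g., \cite{BD73}) that if $B$ is a closed $*$-subalgebra of a symmetric Banach $*$-algebra $A$, then $B$ is symmetric. The usual justification is that for a self-adjoint element $b \in B$, one has $\spr[A]{b}\subset\bR$ by symmetry of $A$, and by the spectral permanence / hole-filling property, $\spr[B]{b}\setminus \spr[A]{b}$ consists of bounded components of the complement of $\spr[A]{b}$ in $\bC$; since $\spr[A]{b}\subset\bR$, its complement is connected, so $\spr[B]{b} = \spr[A]{b}\subset\bR$. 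By Shirali--Ford this Hermitian property is equivalent to symmetry, giving $\spr[B]{b^*b}\subset [0,\infty)$ for every $b\in B$.

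The main (and essentially only) obstacle is this spectral permanence argument; one must take a small amount of care because the subalgebra $Q(\mathcal{L})$ may have strictly larger spectra than the ambient algebra. The adjunction of a unit, if needed, causes no trouble because the standard construction respects the $*$-structure and preserves symmetry. Once this is in place, combining Propositions~\ref{p:1} and~\ref{p:2} with rigid symmetry yields the corollary in two lines.
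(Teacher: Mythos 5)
Your proposal is correct and follows essentially the same route as the paper: embed $\mathcal{L}$ via $Q$ (Proposition~\ref{p:2}) into $\ell^1(G)\projtensor \cB(\ell^2(G))$, which is symmetric by the definition of rigid symmetry, invoke the inheritance of symmetry by closed $*$-subalgebras, and transfer to $CD(G)$ via $R$ (Proposition~\ref{p:1}). Your spectral-permanence/hole-filling justification of the inheritance step (which the paper states without proof) is valid --- note that it works because $Q(\mathcal{L})$ contains the unit $\delta_e\otimes I$ of the ambient algebra, and one could even shortcut Shirali--Ford by applying hole-filling directly to $b^*b$, whose $A$-spectrum lies in $[0,\infty)$ and hence has connected complement.
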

\section{Inverse Closedness}

Given two Banach algebras $\cA \subseteq \cB $ with common identity,
$\cA $ is inverse-closed in $\cB $, if 
$$a\in \cA  \, \text{ and } a^{-1} \in \cB \,\, \Rightarrow \,\, a^{-1} \in
\cA \, .
$$
This notion occurs under many names: one also says that $\cA $ is a
spectral subalgebra or a local subalgebra  of $\cB $.  The pair $(\cA ,
\cB )$ is called a Wiener pair by Naimark~\cite{naimark72}.   An
important property of an inverse-closed subalgebra $\cA $ is that it
possesses the  same holomorphic functional calculus   as $\cB $. 

Inverse-closedness is usually proved by means of Hulanicki's
Lemma~\cite{hul72}.  Let $r_{\cA }(a) $ denote the spectral radius of
$a$ in the algebra $\cA $. If $r_{\cA }(a) = r_{\cB }(a) $ for all $a=
a^* \in \cA $, then we have equality of the spectra $\mathrm{sp}_{\cA
} (a) =  \mathrm{sp}_{\cB} (a) $ for all $a\in \cA $. Consequently, if
$\cB $ is symmetric, then $\cA $ is also symmetric. 
For this version of Hulanicki's lemma,  see~\cite[Lemma 3.1 and
6.1]{fglm} and \cite[Lemma~5.1]{grocomp} for an elementary proof.

Our goal is to show that the algebra of convolution-dominated matrices
$CD(G)$ 
is inverse-closed in $\mathcal{B}(\ell ^2(G))$.  For this purpose   we
consider two natural  unitary representations of the twisted 
$L^1$-algebra $\cL $.
\par
The first representation  is  the so-called  $D$-regular representation of $\cL
$. Recall that  $D : m \mapsto D^m$ is a faithful representation of
the $C^{\ast}$-algebra $\ell ^{\infty}$  by multiplication operators in $B(\ell ^2(G))$.
Then as in Leptin~\cite[\S 3]{lep68} the  $D$-regular representation
 $\lambda^D$ of $\mathcal{L}=\ell ^1(G,\ell ^{\infty},T)$ 
on the Hilbert space $\ell ^2(G,\ell ^2(G))$
is defined  by
\[
\lambda^D(f)\xi (x) = \sum _{y\in G} D^{T_y f(xy)} \xi(y^{-1}),\;\xi\in \ell ^2(G,\ell ^2(G)),\;f\in\mathcal{L} .
\]
One easily verifies that this defines indeed  a $\ast$-representation of
$\mathcal{L}$.
\par
The second representation is the mapping
$R:\mathcal{L}\to CD(G)\subset B(\ell ^2(G))$ introduced
in~\eqref{eq:cc6}. By Proposition~\ref{p:1},  $R$  is also  
 a $\ast$-representation of $\mathcal{L}$ on $\ell ^2(G)$. We call
 this representation the 
 canonical representation of $\mathcal{L}$.  

\begin{prop}\label{prop:regular}
The $D$-regular representation $\lambda ^D$ of $\mathcal{L}$ is a multiple of the
canonical representation $R$. Hence $\|R(f)\| = \|\lambda ^D(f)\|$ for
all $f\in \mathcal{L}$.
\end{prop}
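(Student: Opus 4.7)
The plan is to exhibit an explicit unitary $W$ on $\ell^2(G,\ell^2(G)) \cong \ell^2(G) \otimes \ell^2(G)$ that intertwines $\lambda^D$ with $R \otimes I_{\ell^2(G)}$. Since $R \otimes I_{\ell^2(G)}$ is an orthogonal direct sum of copies of $R$ indexed by an orthonormal basis of the second factor, this says precisely that $\lambda^D$ is a multiple of $R$, and the norm equality $\|\lambda^D(f)\| = \|R(f)\|$ is then automatic because the operator norm on a direct sum of unitarily equivalent operators equals the common summand norm.

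First I unravel both representations as kernels on $G \times G$. Writing $f = \sum_v m_v \delta_v$ and applying the definitions of $\lambda^D$ and of $T_y$ gives
\[
(\lambda^D(f)\xi)(x,z) = \sum_{y \in G} m_{xy}(y^{-1}z)\,\xi(y^{-1},z),
\]
which after the substitution $u = y^{-1}$ becomes $\sum_u m_{xu^{-1}}(uz)\,\xi(u,z)$. On the other hand, \eqref{eq:c1} and \eqref{eq:cc6} identify the matrix of $R(f)$ as $R(f)(x,v) = m_{xv^{-1}}(v)$, so
\[
\bigl((R(f) \otimes I)\eta\bigr)(x,z) = \sum_{v \in G} m_{xv^{-1}}(v)\,\eta(v,z).
\]
The two formulas differ only by a shift in the second argument of $m_{xv^{-1}}$, which suggests that a $z$-dependent translation in the first variable will bridge them.

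Second I define $W \colon \ell^2(G \times G) \to \ell^2(G \times G)$ by $(W\xi)(x,z) = \xi(xz^{-1},z)$. This is the pullback by the bijection $(x,z) \mapsto (xz^{-1},z)$ of $G \times G$, so $W$ is unitary with inverse $(W^{-1}\eta)(x,z) = \eta(xz,z)$. A direct computation starting from the kernel formula for $\lambda^D$ above, followed by the substitution $v = uz$, then yields
\[
(W\lambda^D(f)W^{-1}\eta)(x,z) = \sum_u m_{xz^{-1}u^{-1}}(uz)\,\eta(uz,z) = \sum_v m_{xv^{-1}}(v)\,\eta(v,z),
\]
which is exactly $\bigl((R(f) \otimes I)\eta\bigr)(x,z)$. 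Hence $W\lambda^D(f)W^{-1} = R(f) \otimes I_{\ell^2(G)}$ for every $f \in \cL$, completing the proof.

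There is no serious obstacle beyond locating the right change of variables. Conceptually the unitary $W$ identifies each ``$z$-diagonal'' $\{(xz,z) : x \in G\}$ of $G \times G$ with the horizontal slice $G \times \{z\}$, turning the twisted action of $\cL$ on $\ell^2(G,\ell^2(G))$ into a plain tensor product action. This is a concrete instance of the general principle for crossed products that the $\pi$-regular representation of a twisted $L^1$-algebra $\ell^1(G,A,T)$ is a multiple of any integrated representation built from a covariant extension of $\pi$ — here the pair $(D,\lambda)$ that produces $R$ — which is why such a $W$ has to exist.
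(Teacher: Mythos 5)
Your proof is correct and follows essentially the same route as the paper: the same identification $\ell^2(G,\ell^2(G))\cong\ell^2(G\times G)$, the same realization of the multiple of $R$ acting in the first coordinate, and the same intertwining change of variables — your $W$ is exactly the inverse of the paper's unitary $S\xi(x,z)=\xi(xz,z)$, so $W\lambda^D(f)W^{-1}=R(f)\otimes I$ restates the paper's relation $\lambda^D(f)S=SR^\omega(f)$. The only (harmless) additions are your explicit verification of unitarity via the bijection $(x,z)\mapsto(xz^{-1},z)$ and the closing remark placing the result in the general crossed-product context.
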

\begin{proof}
We identify $\ell ^2(G,\ell ^2(G))$ with $\ell ^2(G\times G)$. Let
$R^\omega $ be the extension of $R$ from $\ell ^2(G)$ to $\ell
^2(G\times G)$  by letting the operators 
$R(f)=\sum _{y\in G} \lambda(y)\circ D^{f(y)},\; f\in \mathcal{L}$, 
act in the first coordinate only, i.e., for $\xi \in \ell ^2(G\times
G)$ 
\begin{equation}
  R^{\omega}(f) \xi (x,z) =\sum _{y\in G} f(y)(y^{-1}x)\xi(y^{-1}x,z).
\end{equation}
Next we  define
a  candidate for an intertwining operator between the  $D$-regular
representation and the  $\card{G}$-multiple $R^\omega $ of the canonical representation
by
\[
S\xi(x,z)=\xi(xz,z),\;\mbox{ where }\xi\in \ell ^2(G\times G).
\]
Then on the one hand we have 
\[
S[R^{\omega}(f) \xi](x,z) = \sum _{y\in G} f(y)(y^{-1}xz)\xi(y^{-1}xz,z).
\]
 On the other hand
\begin{eqnarray*}
\lambda^D(f) (S\xi) (x,z) &=& \sum _{y\in G} (T_yf(xy))(z) (S\xi)(y^{-1},z)\\
&=& \sum _{y\in G} (T_{x^{-1}y}f(y))(z) (S\xi)(y^{-1}x,z)\\
&=& \sum _{y\in G} f(y)(y^{-1}xz) (S\xi)(y^{-1}x,z)\\
&=& \sum _{y\in G} f(y)(y^{-1}xz) \xi(y^{-1}xz,z).
\end{eqnarray*}
Consequently, 
\begin{equation}
  \label{eq:cc9}
\lambda ^D(f) (S\xi ) = SR^\omega (f)\xi   
\end{equation}
 for all $f\in
\cL $ and $\xi \in \ell ^2(G\times G)$. Since $S $ is unitary on $\ell
^2(G\times G)$, $\lambda ^D$ and $R^\omega $ are equivalent.
\end{proof}

To deal with inverse-closedness, we need to compare several norms on
$\mathcal{L}$ and $CD(G)$. Let $\norm[\ast]{\;.\;}$ be  the largest
$C^{\ast}$  norm on $\mathcal{L}$. By a theorem of Ptak~\cite{ptak70}
a Banach $*$-algebra $\cA $  is symmetric,
\fif\ the largest $C^*$-seminorm $\| \cdot \|_*$ on $\cA $ satisfies
$\|a^*a\|_* = r_{\cA } (a^*a)$ for all $a\in \cA $. See also~\cite[\S
41 Corollary 8]{BD73}.

As a first consequence of Proposition~\ref{prop:regular} we identify
the largest $C^*$-norm on $CD(G)$. 

\begin{cor}\label{cor:2}
Let $G$ be an amenable discrete group, then the largest  $C^{\ast}$ norm on
$\mathcal{L}$ equals the operator norm on $CD(G)$.
\end{cor}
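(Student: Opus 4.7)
One inequality is immediate: since $R$ is a $*$-representation of $\cL$ on $\ell^2(G)$ (Proposition~\ref{p:1}), and $R(\cL) = CD(G)$ isometrically at the level of $\cL$-norms, the operator norm $\|R(f)\|_{\cB(\ell^2(G))}$ is dominated by the largest $C^*$-seminorm $\|f\|_*$ on $\cL$. So the content of the corollary is the reverse inequality
\[
\|f\|_* \;\leq\; \|R(f)\|_{\cB(\ell^2(G))} \qquad \text{for all } f\in\cL.
\]

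The plan is to route through the $D$-regular representation. By Proposition~\ref{prop:regular} we already have $\|R(f)\|_{\cB(\ell^2(G))} = \|\lambda^D(f)\|$, so it suffices to prove
\[
\|\pi(f)\| \;\leq\; \|\lambda^D(f)\|
\]
for every $*$-representation $\pi$ of $\cL$. Interpret $\cL$ as the $L^1$-version of the crossed product $\ell^\infty(G)\rtimes_T G$; then $\lambda^D$ is precisely the regular (reduced) representation associated to the covariant pair $(\lambda,D)$, while $\|\cdot\|_*$ is the full (universal) $C^*$-norm. The desired inequality is therefore the statement that the reduced and full $C^*$-norms on this crossed product coincide, a phenomenon that holds whenever $G$ is amenable. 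This is exactly the setting treated by Leptin in~\cite{lep68}, and it generalizes the classical identity $C^*(G) = C^*_r(G)$ for amenable discrete $G$.

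The hard step is the amenability argument itself. The standard route uses a F\o{}lner sequence $(F_n)$ in $G$: given a $*$-representation $\pi$ of $\cL$ on $H$ and a unit vector $\eta\in H$, one forms for each $n$ the vector $\xi_n \in H\otimes \ell^2(F_n)$ defined by $\xi_n(y) = |F_n|^{-1/2}\,\pi(\tau_y)\eta$ for a suitable family of partial isometries $\tau_y$ implementing the covariance, and then checks that $\langle \lambda^D(f)\widetilde\xi_n,\widetilde\xi_n\rangle$ approximates $\langle \pi(f)\eta,\eta\rangle$ to arbitrary precision, with the error controlled by the F\o{}lner condition $|F_n\triangle yF_n|/|F_n|\to 0$. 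Weak containment of $\pi$ in $\lambda^D$ then forces $\|\pi(f)\| \leq \|\lambda^D(f)\|$. Passing to the supremum over all $\pi$ yields $\|f\|_* \leq \|\lambda^D(f)\| = \|R(f)\|_{\cB(\ell^2(G))}$, completing the proof.

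The main obstacle is cleanly carrying out the F\o{}lner approximation in the twisted setting, where one must keep track of the action $T$ on $\ell^\infty(G)$ alongside the translation in $G$; this is precisely the calculation done in Leptin's work on generalized $L^1$-algebras, which we invoke rather than reproduce.
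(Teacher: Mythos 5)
Your proposal is correct and follows essentially the same route as the paper: both reduce the claim via Proposition~\ref{prop:regular} to the statement that $\lambda^D$ realizes the maximal $C^*$-norm on $\mathcal{L}$, and both obtain that statement from amenability via Leptin's work on generalized $L^1$-algebras (the paper cites \cite[Satz 6]{lep68} directly, which is exactly the full-equals-reduced crossed-product phenomenon whose F\o{}lner-type proof you sketch but, like the paper, ultimately invoke rather than reproduce).
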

\begin{proof}
Since $G$ is amenable, it follows from \cite[Satz 6]{lep68} of Leptin
that for the representation $D$ of $\ell ^{\infty}(G)$ the $D$-regular
representation $\lambda^D$ defines the largest  $C^{\ast}$ norm on $\mathcal{L}$.
Therefore  we obtain 
\[
\norm[\ast]{f} = \norm{\lambda^D(f)} = \norm[B(\ell ^2(G))]{R(f)}
\qquad  \text{for every }  f\in \mathcal{L}\, ,
\]
where the last equality follows from Proposition~\ref{prop:regular}.
\end{proof}

\begin{prop} \label{prop:normid}
Let $G$ be a discrete, amenable, and rigidly symmetric group.
Then 
\begin{equation}
  \label{eq:cc10}
  \sprad[\mathcal{L}]{f^{\ast}f} = \sprad[CD(G)]{R(f)^{\ast}R(f)} =
\norm[B(L^2(G))]{R(f)}^2 \qquad \text{ for all } f\in \mathcal{L} .
\end{equation}
\end{prop}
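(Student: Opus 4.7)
The plan is to combine three pieces already assembled in the paper: the symmetry of $\mathcal{L}$ (Corollary~\ref{cor:1}), the identification of the largest $C^*$-norm on $\mathcal{L}$ with the operator norm (Corollary~\ref{cor:2}), and the fact that $R$ is an isometric $*$-isomorphism (Proposition~\ref{p:1}). Virtually no new content is required; the work is to chain these statements via Ptak's theorem and the $C^*$-identity in $\cB(\ell^2(G))$.

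First I would invoke Corollary~\ref{cor:1}: since $G$ is (discrete and) rigidly symmetric, the twisted $L^1$-algebra $\mathcal{L}=\ell^1(G,\ell^\infty(G),T)$ is a symmetric Banach $*$-algebra. By Ptak's theorem (quoted in the paragraph preceding the proposition), symmetry is equivalent to the identity
\[
\|f^*f\|_* = \sprad[\mathcal{L}]{f^*f}\qquad \text{for every } f\in \mathcal{L},
\]
where $\|\cdot\|_*$ denotes the largest $C^*$-seminorm on $\mathcal{L}$.

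Next I would use amenability. By Corollary~\ref{cor:2}, $\|f\|_* = \|R(f)\|_{\cB(\ell^2(G))}$ for all $f\in\mathcal{L}$. Applying this with $f$ replaced by $f^*f$, and using that $R$ is a $*$-homomorphism (Proposition~\ref{p:1}) together with the $C^*$-identity in $\cB(\ell^2(G))$,
\[
\|f^*f\|_* \;=\; \|R(f^*f)\|_{\cB(\ell^2(G))} \;=\; \|R(f)^*R(f)\|_{\cB(\ell^2(G))} \;=\; \|R(f)\|_{\cB(\ell^2(G))}^2.
\]
This secures the right-hand equality of \eqref{eq:cc10} once it is combined with the first step.

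Finally, the middle equality of \eqref{eq:cc10} is essentially free: since $R:\mathcal{L}\to CD(G)$ is an isometric $*$-isomorphism (Proposition~\ref{p:1}), the spectral radius is preserved, so
\[
\sprad[\mathcal{L}]{f^*f} \;=\; \sprad[CD(G)]{R(f^*f)} \;=\; \sprad[CD(G)]{R(f)^*R(f)}.
\]
Chaining the three identities yields \eqref{eq:cc10}. There is no real obstacle here; the proposition is the payoff of the preceding sections, and the only subtle point to flag is that both hypotheses on $G$ are genuinely used: rigid symmetry powers the application of Ptak's theorem through Corollary~\ref{cor:1}, while amenability is needed to pass from $\|\cdot\|_*$ to the concrete operator norm via Corollary~\ref{cor:2}.
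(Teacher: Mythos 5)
Your proposal is correct and follows essentially the same route as the paper: symmetry of $\mathcal{L}$ (Corollary~\ref{cor:1}) plus Pt\'ak's theorem to identify $\sprad[\mathcal{L}]{f^{\ast}f}$ with $\norm[\ast]{f^{\ast}f}$, Corollary~\ref{cor:2} to replace $\norm[\ast]{\;.\;}$ by the operator norm, and the $C^{\ast}$-identity together with Proposition~\ref{p:1} to finish. Your handling of the middle equality via the isometric $\ast$-isomorphism $R$ is a slight (harmless) streamlining of the paper, which instead invokes symmetry of both $\mathcal{L}$ and $CD(G)$ and applies Pt\'ak's theorem to each.
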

\begin{proof}
Since $\mathcal{L}$ and $CD(G)$ are symmetric by
Corollary~\ref{cor:1}, Ptaks theorem~\cite{ptak70} implies  that
$\norm[\ast]{f}^2=\sprad[\mathcal{L}]{f^{\ast}f}=\sprad[CD(G)]{R(f)^{\ast}R(f)}$.
Since  Corollary~\ref{cor:2} says that $\|f^*f\|_* =
 \|R(f)^*R(f)\|_{\cB(\ell ^2)}$, we obtain the identity~\eqref{eq:cc10}. 
\end{proof}

\begin{thm}\label{thm:invclosed}
Let $G$ be a discrete, amenable, and rigidly symmetric group. If $f\in
\mathcal{L}$
is such that $R(f)\in CD(G)$ has an inverse in $B(\ell ^2(G))$ then $f^{-1}$
exists in $\mathcal{L}$ and $R(f^{-1})=R(f)^{-1}$ is in $CD(G)$.
\end{thm}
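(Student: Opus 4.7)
The plan is to apply Hulanicki's lemma, stated in the paragraph preceding the theorem, to the pair $(CD(G), \cB(\ell^2(G)))$. The main input will be Proposition~\ref{prop:normid}, which provides the required equality of spectral radii for positive elements, and Proposition~\ref{p:1}, which allows us to pull everything back to $\cL$.

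First, I would check that Hulanicki's hypothesis holds, namely that $\sprad[CD(G)]{A} = \sprad[\cB(\ell^2(G))]{A}$ for every self-adjoint $A \in CD(G)$. Given such $A$, pick $f \in \cL$ with $R(f) = A$, which is possible and unique by Proposition~\ref{p:1}; then $f^* = f$ as well, since $R$ is a $*$-iso\-mor\-phism. Now $R(f)^* R(f) = A^2$, so Proposition~\ref{prop:normid} gives
\[
\sprad[CD(G)]{A^2} = \sprad[CD(G)]{R(f)^*R(f)} = \norm[B(\ell^2(G))]{R(f)}^2 = \norm[B(\ell^2(G))]{A}^2.
\]
On the one hand, $\sprad[CD(G)]{A^2} = \sprad[CD(G)]{A}^2$ because spectral radius is multiplicative on powers. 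On the other hand, $\norm[B(\ell^2(G))]{A} = \sprad[B(\ell^2(G))]{A}$ since $A$ is a self-adjoint element of the $C^*$-algebra $\cB(\ell^2(G))$. Taking square roots (both sides are non-negative) yields $\sprad[CD(G)]{A} = \sprad[B(\ell^2(G))]{A}$, as required.

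Next, Hulanicki's lemma (cited from \cite{fglm,grocomp}) then delivers $\spr[CD(G)]{A} = \spr[\cB(\ell^2(G))]{A}$ for every $A \in CD(G)$, not only the self-adjoint ones. In particular, $CD(G)$ is inverse-closed in $\cB(\ell^2(G))$: if $R(f)$ is invertible in $\cB(\ell^2(G))$, i.e.\ $0 \notin \spr[\cB(\ell^2(G))]{R(f)}$, then $0 \notin \spr[CD(G)]{R(f)}$, so $R(f)^{-1}$ already lies in $CD(G)$.

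Finally, I would transfer this inverse back to $\cL$ via Proposition~\ref{p:1}. Since $R: \cL \to CD(G)$ is an isometric $*$-iso\-mor\-phism sending the unit $\delta_e \otimes \mathbf{1}$ to the identity operator, invertibility is preserved in both directions: there exists $g \in \cL$ with $R(g) = R(f)^{-1}$, and $R(g \conv f) = R(g)R(f) = \mathrm{Id} = R(f \conv g)$ forces $g = f^{-1}$ in $\cL$. Thus $f^{-1}$ exists in $\cL$ with $R(f^{-1}) = R(f)^{-1} \in CD(G)$, which is the conclusion of the theorem. The only delicate step is the verification of the Hulanicki hypothesis; everything else is formal manipulation once Proposition~\ref{prop:normid} is in hand, and that is the substantive input provided by the symmetry of $\cL$ together with amenability of $G$.
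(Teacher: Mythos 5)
Your proposal is correct and follows essentially the same route as the paper: the paper also verifies Hulanicki's hypothesis for hermitian elements via Proposition~\ref{prop:normid} (writing $\sprad[\mathcal{L}]{f}^2 = \sprad[\mathcal{L}]{f^{\ast}f} = \norm[B(\ell^2(G))]{R(f)}^2$, which is your computation transported through the isometric $\ast$-isomorphism $R$ of Proposition~\ref{p:1}), then invokes \cite[Lemmas 3.1 and 6.1]{fglm} to get equality of spectra for all elements and concludes invertibility of $f$ in $\mathcal{L}$. You merely make explicit some steps the paper leaves implicit, such as $\norm[B(\ell^2(G))]{A} = \sprad[B(\ell^2(G))]{A}$ for self-adjoint $A$ and the formal transfer of the inverse back through $R$.
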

\begin{proof}
If $f\in\mathcal{L}$ is hermitian, i.e. $f=f^{\ast}$, then by
Proposition~\ref{prop:normid} 
\[
\sprad[\mathcal{L}]{f}^2= \sprad[\mathcal{L}]{f^{\ast}f} =\norm[B(L^2(G))]{R(f)}^2 .\]
\cite[Lemma 6.1 and 3.1]{fglm} imply
that
 \[ \spr[\mathcal{L}]{f} = \spr[B(\ell ^2(G))]{R(f)},\quad 
 \forall f\in\mathcal{L}.\]
Thus the invertibility of $R(f)$ in $\mathcal{B}(\ell ^2(G))$ implies
the invertibility of $f$ in $\mathcal{L}$. 
\end{proof}

By writing  Theorem~\ref{thm:invclosed}  explicitly as a statement
about the off-diagonal decay of an invertible matrix, we recover
Theorem~\ref{main} of the introduction.

\begin{cor}
  \label{main1}
Let $G$ be a discrete, amenable, and rigidly symmetric group (for
instance,  a finitely generated group of polynomial growth).   If a
matrix $A$ indexed by $G$ satisfies the off-diagonal 
decay condition 
$|A(x,y)| \leq a(xy^{-1}) $ for some $a\in \ell ^1(G)$ and $A$ is
invertible on $\ell ^2(G)$, then there exists  $b\in \ell ^1(G)$ such
that $|A^{-1}(x,y)| \leq b(xy^{-1})$. 
\end{cor}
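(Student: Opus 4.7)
The plan is essentially to translate the abstract Banach algebra statement, Theorem~\ref{thm:invclosed}, back into the concrete matrix language of the corollary, using the isometric $*$-isomorphism $R:\mathcal{L}\to CD(G)$ from Proposition~\ref{p:1}. First I would observe that the off-diagonal decay hypothesis $|A(x,y)|\leq a(xy^{-1})$ with $a\in\ell^1(G)$ is exactly the defining condition for $A\in CD(G)$, with $\|A\|_1\leq\|a\|_{\ell^1}$. By Proposition~\ref{p:1}, the map $R$ is surjective, so there exists $f\in\mathcal{L}=\ell^1(G,\ell^\infty(G),T)$ with $R(f)=A$.

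Next I would appeal directly to Theorem~\ref{thm:invclosed}: assuming $G$ is discrete, amenable, and rigidly symmetric, and given that $R(f)=A$ is invertible on $\ell^2(G)$, we conclude that $f^{-1}$ exists in $\mathcal{L}$ and $R(f^{-1})=A^{-1}$. Hence $A^{-1}\in CD(G)$. Unpacking the definition of $CD(G)$ one more time produces the sought-after $b\in\ell^1(G)$ with $|A^{-1}(x,y)|\leq b(xy^{-1})$; explicitly, as noted right after the definition of $CD(G)$, one may take $b(z)=\sup_{xy^{-1}=z}|A^{-1}(x,y)|$, which lies in $\ell^1(G)$ precisely because $A^{-1}\in CD(G)$.

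The only remaining point is the parenthetical claim that a finitely generated group of polynomial growth satisfies the hypotheses. For this I would invoke two classical results, which are already quoted in Sections~1 and~3 of the paper: Gromov's theorem~\cite{grom81} identifies such groups as finite extensions of finitely generated nilpotent groups, and the Leptin--Poguntke theorem~\cite{LP79} states that finite extensions of discrete nilpotent groups are rigidly symmetric. Amenability then follows from the well-known facts that nilpotent groups are amenable and amenability is preserved under finite extensions.

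In this sense there is no genuine obstacle at this point: all the substantive work has been done in the preceding sections, namely identifying $CD(G)$ with a twisted $L^1$-algebra (Proposition~\ref{p:1}), embedding it into $\ell^1(G)\,\hat\otimes\,\mathcal{B}(\ell^2(G))$ to inherit symmetry from rigid symmetry of $G$ (Proposition~\ref{p:2} and Corollary~\ref{cor:1}), and combining Leptin's equality of the $D$-regular and canonical representations on amenable $G$ (Proposition~\ref{prop:regular}, Corollary~\ref{cor:2}) with Pt\'ak's theorem and Hulanicki's lemma to deduce inverse-closedness (Theorem~\ref{thm:invclosed}). The corollary is then simply a dictionary translation, with the one auxiliary task of citing Gromov and Leptin--Poguntke to verify the hypotheses in the polynomial-growth case.
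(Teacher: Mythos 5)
Your proposal is correct and follows exactly the paper's route: Corollary~\ref{main1} is obtained there precisely by translating Theorem~\ref{thm:invclosed} through the isometric $\ast$-isomorphism $R$ of Proposition~\ref{p:1}, and the parenthetical claim about finitely generated groups of polynomial growth is settled, as you do, by Gromov~\cite{grom81} together with Leptin--Poguntke~\cite{LP79} (the paper additionally cites~\cite{pog92}). Nothing is missing.
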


A slight variation yields the following result of which previous
versions have been quite useful in time-frequency
analysis~\cite{fg97jfa}.

\begin{cor}\label{main2}
  Assume that $A \in CD(G)$ and that $A=A^*$. Then the following are
  equivalent: 

(i)  $A$ is invertible on $\ell ^2(G)$. 

(ii)  $A$ is invertible on $\ell ^p(G)$ for \emph{all} $p, 1\leq p \leq
\infty $. 

(iii)   $A$ is invertible on $\ell ^p(G)$ for \emph{some} $p, 1\leq p \leq
\infty $.   
\end{cor}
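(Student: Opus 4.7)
My plan is to combine Theorem~\ref{thm:invclosed} with the elementary fact that every element of $CD(G)$ acts boundedly on each $\ell^p(G)$, and then to cover the remaining direction via duality and Riesz--Thorin interpolation.

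For (i)$\Rightarrow$(ii), Theorem~\ref{thm:invclosed} gives $A^{-1}\in CD(G)$, so by \eqref{eq:cc28} and Young's inequality $A^{-1}$ is bounded on every $\ell^p(G)$, proving invertibility of $A$ there. The implication (ii)$\Rightarrow$(iii) is trivial.

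The substantive direction is (iii)$\Rightarrow$(i). Suppose $A$ is invertible on $\ell^p(G)$ with inverse $B$. Since the matrix of $A$ is self-adjoint, the Banach-space adjoint of $A\colon \ell^p\to\ell^p$ coincides with the action of $A$ on $\ell^{p'}$ (for $p=\infty$ using the identification $\ell^\infty=(\ell^1)^\ast$). Because a bounded operator on a Banach space is invertible if and only if its Banach adjoint is, $A$ is also invertible on $\ell^{p'}(G)$, with inverse $B^\ast$. After interchanging $p$ and $p'$ if necessary, I may assume $1\le p\le 2\le p'\le\infty$, so that $\ell^p(G)\subseteq\ell^{p'}(G)$ in the discrete setting. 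Next I would verify that $B^\ast$ restricted to $\ell^p$ agrees with $B$: for any $c\in\ell^p$, both $Bc$ and $B^\ast c$ lie in $\ell^{p'}$ and solve $Ax=c$ there, and injectivity of $A$ on $\ell^{p'}$ forces $Bc=B^\ast c$. Hence the single operator $T:=B^\ast\colon\ell^{p'}\to\ell^{p'}$ restricts to $B\colon\ell^p\to\ell^p$, and Riesz--Thorin interpolation yields boundedness of $T$ on every intermediate $\ell^q(G)$, in particular on $\ell^2(G)$. The identities $ATc=c$ and $TAc=c$ then hold for every finitely supported $c$, since there $T$ acts as $B$; density of the finitely supported sequences in $\ell^2(G)$ together with continuity of $A$ and $T$ on $\ell^2$ propagates these identities to all of $\ell^2$. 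Thus $A$ is invertible on $\ell^2(G)$ with inverse $T$.

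The main technical point to take care of is the case $p\in\{1,\infty\}$, where one of $\ell^p$, $\ell^{p'}$ fails to contain the finitely supported sequences as a norm-dense subspace. This is already accommodated by the plan above, since after the reduction to $p\le 2$ the density argument is applied only inside $\ell^p$ (which is at worst $\ell^1$), where finitely supported sequences are dense; and the opening duality step reduces $p=\infty$ to $p=1$ before the interpolation is invoked.
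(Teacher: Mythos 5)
Your proof is correct and follows essentially the same route as the paper: boundedness of $CD(G)$-matrices on every $\ell^p$ together with Theorem~\ref{thm:invclosed} for (i)$\Rightarrow$(ii), and self-adjointness/duality followed by Riesz--Thorin interpolation for (iii)$\Rightarrow$(i). The only difference is that you explicitly verify the consistency of the inverses on $\ell^p$ and $\ell^{p'}$ (via injectivity of $A$ on the larger space) before interpolating --- a genuine prerequisite for interpolating invertibility that the paper's one-line appeal to interpolation leaves implicit.
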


\begin{proof}
(i) $\, \Rightarrow \, $ (ii)  Recall that every  matrix $A\in CD(G)$ is
bounded on all $\ell ^p (G)$, 
  $1\leq p \leq \infty $ by \eqref{eq:cc28}. 
Thus if $A\in CD(G)$ is
  invertible on $\ell ^2(G)$, then by Theorem~\ref{main} $A^{-1} \in
  CD(G)$ and thus $A^{-1}$ is invertible on $\ell ^p(G)$ for arbitrary
  $p, 1\leq p\leq \infty $. The implication (ii) $\, \Rightarrow \, $
  (iii) is obvious. 

(iii) $\, \Rightarrow \, $ (i) Assume that $A$ is invertible on some $\ell ^p(G)$. Then the adjoint
operator $A^*=A$ is invertible on the dual space $\ell ^{p'}(G)$,
where $p'= p/(p-1)$ is the conjugate index. By interpolation we obtain
that $A$ is invertible  on the interpolation space $\ell ^2(G)$.
\end{proof}

\noindent \textsl{REMARK:}
The hypotheses on the group $G$ are almost sharp. To see this, let
$\lambda (f) $ denote the convolution operator $c \mapsto \lambda (f) c =
f \ast c$ acting on $\ell ^p (G)$, and let $\spr[\ell ^p]{f}$ the
spectrum of $\lambda (f) $ as an operator acting on $\ell ^p(G)$. Then
$\spr[\ell ^p]{f}= \spr[\ell ^2]{f}$ for all $p\in [1,\infty ]$, if
and only if the group $G$ is amenable and
symmetric~\cite{barnes90,GL04}. Thus amenability and symmetric are
necessary in Theorem~\ref{thm:invclosed}. We do not know whether we
can replace the rigid symmetry by symmetry, because it is an open
problem whether every symmetric group is rigidly
symmetric~\cite{pog92}. 

We emphasize once more that all discrete finitely generated groups of
polynomial growth satisfy the hypotheses of amenability and rigid
symmetry. These groups are    finite extensions of some  discrete
nilpotent  group by Gromov's result~\cite{grom81}, and thus they are
rigidly symmetric by ~\cite{LP79} and~\cite{pog92}.

\section{Symmetry of weighted algebras}

In this section we extend the results about the symmetry of
convolution-domi\-nated operators to the weighted case.

A function $\w :G\to [1,\infty)$ is called a weight on $G$, if it  fulfills
\begin{eqnarray*}
\w(xy) &\leq& \w(x)\w(y),\quad\forall x,y\in G\\
\w(x^{-1})&=&\w(x),\quad\forall x\in G\\
\w(e)&=&1 \, .
\end{eqnarray*}
Given a weight $\omega $ we let
$\lw{G,}{\w}$ be the weighted $\ell ^1$-algebra on $G$.
Using weights, one can  model stronger decay conditions
on convolution-dominated operators as follows.
\begin{defi} An operator $A$ on $\ell ^2(G)$ is called
  \w-convolution-dominated, $A\in CD(G,\omega )$ in short,  if there
  exists an  
  $a\in \lw{G,}{\w}$ such that 
\[
\abs{A(x,y)} \leq a(xy^{-1}), \quad \forall x,y\in G.
\]
We define its norm as
\[
\norm[\w]{A} := \inf \{ \norm[\lw{G,}{\w}]{a} : a\in \lw{G,}{\w},\; \abs{A(x,y)} \leq
a(xy^{-1})\;\forall x,y\in G\}.
\]
\end{defi}
As in  the unweighted case, we may write the norm as 
\[
\|A\|_\omega = \sum_{z\in G} \quad \sup_{\{x,y:\,xy^{-1}=z\}}
\abs{A(x,y)} \quad \w(z)\; < \; \infty\, . 
\]
\par
Thus an operator $A$ is in $CD(G,\omega )$, if it is dominated by a
convolution operator in $\ell ^1(G,\omega)$ in the sense that 
$|Ac(x)| \leq ( a \ast |c|)(x)$ for some $a\in \ell ^1(G,\omega )$. 
Since  $\lw{G,}{\w}$ is a  convolution algebra,   
the space of
\w-convolution-dominated operators $CD(G,\omega )$ is a Banach
$\ast$-algebra with respect to  composition of operators 
and the usual involution of operators in $B(\ell ^2(G))$. Furthermore,
$CD(G,\omega ) \subseteq CD(G) \subseteq \mathcal{B}(\ell ^2(G))$. 
\par
For the study of $CD(G,\omega )$, we consider the weighted, twisted $L^1$-algebra 
$\mathcal{L}_{\w}=\lw{G,\w,}{\ell ^{\infty}(G),T}$, which is defined
as a subalgebra of  $\mathcal{L}$
endowed  with the norm
$$
\|f\|_{\mathcal{L}_{\w}} = \sum _{x\in G} \|f(x) \|_\infty \, \w (x)
\, .
$$


Since $\cL _{\w} $ is a subalgebra of $\cL $,  all algebraical
relations are preserved and the results of Sections~2 and~3 carry over 
to $\cL _{\w}$ after a slight modification of the  norm computations.
\begin{prop} \label{pr:1}
 Let  $R_{\w}$ and $Q_{\w}$ denote  the restrictions of the maps
$R$ and  $Q$ defined in (\ref{eq:R}) and (\ref{eq:Q}) from
$\mathcal{L}$  to
$\mathcal{L}_\omega  = \lw{G,\w,}{\ell ^{\infty}(G),T}$.
Then
\[
R_{\w}:\ell ^1(G,\w,\ell ^{\infty}(G),T) \to CD(G,\w)
\]
 is an iso\-metric  $\ast$-isomorphism and
\[
Q_{\w}: \lw{G,\w,}{\ell ^{\infty}(G),T} \to
\lw{G,}{\w}\projtensor B(\ell ^2(G))
\]
 is an isometric $\ast$-isomorphism
onto a closed $*$-subalgebra.
\end{prop}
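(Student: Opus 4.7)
The strategy is to bootstrap from Propositions \ref{p:1} and \ref{p:2}. Since $\cL_\w$ is a subalgebra of $\cL$ under the same twisted convolution and involution, and since $R$, $Q$ were already shown there to be $*$-homomorphisms on all of $\cL$, the restrictions $R_\w$ and $Q_\w$ automatically preserve the algebraic structure. The only real work is (i) to check that the weighted norms match and (ii) to verify surjectivity onto the claimed targets.

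For $R_\w$, take $f=\sum_v m_v\delta_v\in\cL_\w$. Using the matrix formula \eqref{eq:c1} together with the explicit expression for $\norm[\w]{A}$ recorded just below the definition of $CD(G,\w)$, the computation in the proof of Proposition \ref{p:1} adapts verbatim to yield
\begin{equation*}
\norm[\w]{R_\w f} \;=\; \sum_{z\in G} \w(z)\sup_{\{x,y:\,xy^{-1}=z\}}|D^{m_z}_z(x,y)| \;=\; \sum_{z\in G}\w(z)\,\norm[\infty]{m_z} \;=\; \norm[\cL_\w]{f}.
\end{equation*}
Surjectivity follows exactly as in Proposition \ref{p:1}: given $A\in CD(G,\w)$ with dominating sequence $a\in\lw{G,}{\w}$, set $m_z(y):=A(zy,y)$, so that $\norm[\infty]{m_z}\le a(z)$; then $f:=\sum_z m_z\delta_z$ lies in $\cL_\w$ and $R_\w f=A$ by \eqref{eq:cc6}.

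For $Q_\w$, the key fact is that the isometric identification $\ell^1(G;E)=\ell^1(G)\projtensor E$ of \cite[Ch.~VIII.1]{DU77} used in Proposition \ref{p:2} extends to the weighted setting: $\ell^1(G,\w;E)=\lw{G,}{\w}\projtensor E$ for any Banach space $E$, since multiplying the canonical basis vectors $\delta_x$ by $\w(x)$ is an isometric rescaling compatible with the universal property of the projective tensor product. Applying this with $E=\cB(\ell^2(G))$ and using $\norm[\cB(\ell^2(G))]{D^{m_v}_v}=\norm[\infty]{m_v}$ gives
\begin{equation*}
\norm[\cL_\w]{f} \;=\; \sum_{v\in G}\w(v)\,\norm[\infty]{m_v} \;=\; \sum_{v\in G}\w(v)\,\norm[\cB(\ell^2(G))]{D^{m_v}_v} \;=\; \norm[\lw{G,}{\w}\projtensor\cB(\ell^2(G))]{Q_\w f}.
\end{equation*}
Since $Q_\w$ is an isometry into a Banach space, its image is automatically closed, and it is a $*$-subalgebra because the multiplication and involution identities computed in the proof of Proposition \ref{p:2} involve only the twisted convolution and involution, which are inherited from $\cL$.

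The only ingredient that is not a bookkeeping rerun of Section~\ref{sec:2} is the weighted projective-tensor-product identification; but this is entirely standard and can be verified directly on finite sums, exploiting that $\w$ takes values in $[1,\infty)$ and that the submultiplicativity of $\w$ is precisely what makes $\lw{G,}{\w}$ a Banach algebra. I do not anticipate any serious obstacle.
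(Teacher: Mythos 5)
Your proposal is correct and follows exactly the route the paper takes: the paper gives no separate proof of Proposition \ref{pr:1}, stating only that since $\cL_\w$ is a subalgebra of $\cL$ ``all algebraical relations are preserved and the results of Sections~2 and~3 carry over to $\cL_\w$ after a slight modification of the norm computations.'' Your write-up simply supplies the details the authors leave implicit --- the weighted norm computations for $R_\w$ and $Q_\w$, surjectivity via $m_z(y)=A(zy,y)$ with $\norm[\infty]{m_z}\le a(z)$, and the weighted identification $\ell^1(G,\w;E)=\lw{G,}{\w}\projtensor E$ via the isometric rescaling by $\w$ --- all of which are sound.
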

\par
We are interested in  the symmetry of the weighted
$\ell ^1$-algebra.  This forces us to impose some conditions of subexponential
growth on the weight.
\begin{defi} \label{grrs}
(a) A weight $\omega$ is said to satisfy the \emph{\grs-condition}
(Gelfand-Raikov-Shilov condition)  if
\[
\lim_{n\to\infty} \w(x^n)^{1/n} =1 \qquad \text{ for all } x\in G.
\]
(b) A weight \w\ is said to  satisfy  the 
\emph{\ugrs-condition} (the uniform \grs-condition), if for some
generating subset $U$ of $G$ containing the identity element
\[
\lim _{n\to \infty } \sup _{y\in U^n} \w(y) ^{1/n} = \lim _{n\to \infty } \sup _{x_1,
    \dots , x_n \in U} \w(x_1 x_2 \dots x_n ) ^{1/n} = 1 \, .
\]
\end{defi}

The \grs-condition is a necessary condition for the spectral identity
$\sprad[\ell ^1    ]{f} = \sprad[\ell ^1_\omega   ]{f}$ in weighted
group algebras, and hence for the symmetry of $\ell ^1(G,\omega )$~\cite{fgl}. If
$G$ is a   compactly generated   locally compact 
group of polynomial growth, then the \grs-condition is also sufficient
for  the symmetry of $\ell ^1(G,\omega )$. In this case, the 
\ugrs-condition with  a relatively compact  set $U$  is also   equivalent to
the \grs-condition by the 
results in ~\cite{fgl}. 
However,  if $G$ is not compactly generated, 
the \ugrs-condition may be  a stronger assumption  on the weight.

 We
emphasize that in Definition~\ref{grrs}, $U$ need not be
finite. As a example consider the group $\bZ ^2$ and the weight
$\omega (k_1,k_2) = (1+|k_1|)^s, k_1, k_2 \in \bZ, s>0$. This weight
satisfies the \grs-condition and the \ugrs-condition with the
generating set $\{-1,0,1\} \times \bZ $. 

\begin{thm}\label{th:2}
Let  $G$ be  a rigidly symmetric, amenable,  discrete group. If  the
weight \w\ satisfies the \ugrs-condition and the condition
\begin{equation}
  \label{eq:cc24}
  \sup _{x\in U^n \setminus U^{n-1}} w(x) \leq C   \inf _{x\in U^n
    \setminus U^{n-1}} w(x) \, ,
\end{equation} 
then $\lw{G,}{\w}\projtensor B(\ell ^2(G))$ is inverse-closed in
$\ell ^1(G) \projtensor \cB  (\ell ^2(G))$ and hence symmetric.
\end{thm}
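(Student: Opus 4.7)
My plan is to establish inverse-closedness of $\cA_{\w} := \lw{G,}{\w} \projtensor \cB(\ell ^2(G))$ in $\cA := \ell ^1(G) \projtensor \cB(\ell ^2(G))$ via Hulanicki's lemma, in direct analogy with the proof of Theorem~\ref{thm:invclosed}. Rigid symmetry of $G$ applied to the $C^*$-algebra $\cB(\ell ^2(G))$ immediately gives that $\cA$ is symmetric. By \cite[Lemmas 3.1 and 6.1]{fglm}, it will suffice to verify the spectral radius identity
\[
\sprad[\cA _{\w}]{f} \;=\; \sprad[\cA]{f}
\]
for every hermitian $f \in \cA _{\w}$; this will yield inverse-closedness, and symmetry of $\cA _{\w}$ will follow by inheritance from $\cA$.

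The easy inequality $\sprad[\cA]{f} \leq \sprad[\cA _{\w}]{f}$ is immediate from $\norm[\cA]{\cdot} \leq \norm[\cA _{\w}]{\cdot}$. For the reverse, I would first handle a finitely supported element $f = \sum _{v \in U^N} \delta _v \otimes A_v$. Since $f^n$ is then supported in $U^{nN}$, we obtain
\[
\norm[\cA _{\w}]{f^n} \;=\; \sum _{v \in U^{nN}} \w(v) \, \norm[\cB(\ell ^2(G))]{f^n(v)} \;\leq\; \Big(\sup _{v \in U^{nN}} \w(v)\Big)\, \norm[\cA]{f^n}.
\]
Taking $n$-th roots and letting $n \to \infty$, the \ugrs-condition forces the prefactor to tend to $1$, and hence $\sprad[\cA _{\w}]{f} \leq \sprad[\cA]{f}$ for such $f$.

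For a general $f \in \cA _{\w}$, I would approximate by its truncations $f_N = \sum _{v \in U^N} \delta _v \otimes A_v$, which satisfy $\norm[\cA _{\w}]{f - f_N} \to 0$. The difficulty is that spectral radius is not norm-continuous, so one cannot simply pass to the limit. Here condition~\eqref{eq:cc24} is essential: it asserts that $\w$ is comparable (up to $C$) to its supremum on each shell $U^n \setminus U^{n-1}$. I would decompose $\norm[\cA _{\w}]{f^n}$ as a sum over these shells and use~\eqref{eq:cc24} to factor out the weight from each shell, so that the tail of $f$ can be absorbed into an estimate that still produces the right $n$-th root asymptotics. Together with the \ugrs-condition for the supports reached after $n$ convolutions, this yields the spectral radius identity for arbitrary $f$.

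The main obstacle is clearly the last step: coordinating the global subexponential growth of $\w$ supplied by \ugrs\ with the shell-homogeneity supplied by~\eqref{eq:cc24}, so that the spectral-radius identity survives the passage from compactly supported approximants to an arbitrary element of $\cA _{\w}$. The \ugrs-condition alone controls only supports of compactly supported elements, while~\eqref{eq:cc24} is the tool needed to extend the bound uniformly to elements with full support.
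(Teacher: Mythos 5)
Your frame coincides with the paper's: rigid symmetry of $G$ applied to the $C^*$-algebra $\cB(\ell ^2(G))$ gives symmetry of the unweighted algebra $\cA = \ell ^1(G)\projtensor \cB(\ell ^2(G))$, the reduction to the spectral radius identity via \cite[Lemmas 3.1 and 6.1]{fglm} is right, the easy inequality is right, and your argument for $f$ supported in $U^N$ is correct (note only that such $f$ need not be \emph{finitely} supported, since the paper allows $U$ to be infinite; the estimate is unaffected). The genuine gap is the passage to arbitrary $f\in \cA _{\w}$, and what you offer there is a hope, not an argument. You correctly observe that truncation cannot work as stated --- the spectral radius is upper semicontinuous, whereas passing bounds from the truncations $f_N$ to $f$ would require lower semicontinuity, which fails --- but your fallback, ``decompose $\norm[\cA _{\w}]{f^{(n)}}$ over shells and use \eqref{eq:cc24} to factor out the weight,'' does not by itself produce the claimed $n$-th root asymptotics. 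After splitting over the shells $U^{k}\setminus U^{k-1}$ and setting $b_k = \sum _{x\in U^k\setminus U^{k-1}} \norm[\cB(\ell ^2(G))]{f(x)}$, one is left with $\sum _{k_1,\dots ,k_n} b_{k_1}\cdots b_{k_n}\, v(k_1+\cdots +k_n)$, where $v(k)=\sup _{y\in U^{|k|}}\w (y)$; the assertion that the $n$-th root of this tends to $\norm[\ell ^1]{b}$ is not an elementary estimate but is itself a Wiener--GRS-type symmetry theorem for a weighted convolution algebra. That is precisely the ingredient your sketch is missing.

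The paper closes the gap by a transfer to the commutative Beurling algebra $\lw{\ZZ,}{v}$, which avoids truncation entirely and works for arbitrary $f$ in one stroke. Since $x_j\in U^{k_j}\setminus U^{k_j-1}$ forces $x_1\cdots x_n\in U^{k_1+\cdots +k_n}$, one gets $\w (x_1\cdots x_n)\leq v(k_1+\cdots +k_n)$ and hence $\norm[\cA _{\w}]{f^{(n)}}\leq \norm[{\lw{\ZZ,}{v}}]{b^{(n)}}$, with $b^{(n)}$ the convolution power of $b=(b_k)\geq 0$ on $\ZZ$. The \ugrs-condition says exactly that $\lim _n v(n)^{1/n}=1$, i.e.\ $v$ satisfies the \grs-condition on $\ZZ$, so $\lw{\ZZ,}{v}$ is symmetric by \cite[Lemma 3.2]{fgl} and $\sprad[{\lw{\ZZ,}{v}}]{b}=\sprad[\ell ^1(\ZZ)]{b}=\norm[\ell ^1]{b}=\norm[\cA]{f}$, using positivity of $b$. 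Note also that the role of \eqref{eq:cc24} is different from what you guessed: it is not there to ``absorb tails'' of truncations, but solely to guarantee $\norm[{\lw{\ZZ,}{v}}]{b}\leq C\,\norm[\cA _{\w}]{f}<\infty$, i.e.\ that $b$ lies in $\lw{\ZZ,}{v}$ at all, so that the transfer is meaningful. Finally, this chain yields only the norm bound $\sprad[\cA _{\w}]{f}\leq \norm[\cA]{f}$; a last step, also absent from your sketch, upgrades it by the standard power trick $\sprad[\cA _{\w}]{f}=\sprad[\cA _{\w}]{f^{(k)}}^{1/k}\leq \norm[\cA]{f^{(k)}}^{1/k}\to \sprad[\cA]{f}$ as $k\to \infty$.
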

\begin{proof}
By the assumption on $G$ we know that the algebra
$\mathcal{B}=\lw{G}{}\, \projtensor \, B(\ell ^2(G))$ is symmetric.
Since $\mathcal{A}=\lw{G,}{\w}\, \projtensor \,  B(\ell ^2(G))$ is a
subalgebra of $\mathcal{B}$,
by  \cite[Lemmas~3.1 and 6.1]{fglm}, we need only show the
equality of the both 
spectral radii on the latter algebra.

Since for $f\in \mathcal{A}$
\[
\norm[\mathcal{B}]{f}=\sum_{x\in G} \norm[B(\ell ^2(G))]{f(x)}%
\leq\sum_{x\in G} \norm[B(\ell ^2(G))]{f(x)}\w(x)=%
\norm[\mathcal{A}]{f},
\]
the spectral radius formula implies  that 
\[
\sprad[\mathcal{B}]{f}\leq \sprad[\mathcal{A}]{f} \qquad \text{ for
  all }  f\in \mathcal{A}.
\]
Thus  it suffices to show the converse inequality.
To this end we define a weight $v$ on $\ZZ$ by
\[
v (n)  =  \sup _{y\in U^{|n|}} \w (y),
\]
where $U$ is a generating set, containing the identity element, such
that \linebreak[4]
$\lim _{n\to \infty } \sup _{y\in U^n} \w(y) ^{1/n} =1$. 
By induction one finds an estimation for the norm of the 
n-th convolution power $f^{(n)}$ of $f\in \mathcal{A}$:
\begin{equation}
  \label{eq:2}
  \norm[\mathcal{A}]{f^{(n)} } \leq \sum_G \dots \sum_G \norm{f(x_1)}\, 
\norm{f(x_2)}\,
  \dots \norm{f(x_n)}\, \w(x_1 \dots x_n) \,.  
\end{equation}
Since $G= \bigcup _{n=1}^\infty \big(U^n \setminus U^{n-1}\big)$ as a
 disjoint union (where $U^0 = \emptyset$), we may split each sum
accordingly. This yields  
\begin{eqnarray*}
\lefteqn{\norm[\mathcal{A}]{f^{(n)}}\;\leq\;}\\ 
&\leq& \sum _{k_1, k_2 , \dots , k_n =1}^\infty \sum_{U^{k_1}\setminus
U^{k_1-1}} \dots \sum_{U^{k_n}\setminus U^{k_n-1}}  
\norm{f(x_1)}\dots \norm{f(x_n)}\, \w(x_1 \dots x_n)\, .
\end{eqnarray*}
If $x_j \in U^{k_j}\setminus   U^{k_j-1}$, then $x_1 \dots x_n \in
  U^{k_1 + \dots + k_n }$ 
  and so the weight is majorized by 
\[
 \w(x_1 \dots x_n) \leq \sup _{y\in   U^{k_1 + \dots + k_n }} \w(y) =
 v (k_1 + \dots + k_n) \, .
\]
Set $b_k := \sum_{U^k \setminus U^{k-1}} \norm{f(x)} $ and 
$b=(b_k)_{k\in \NN}$. Then
clearly we have $\norm[\mathcal{B}]{f} = \norm[\ell ^1]{b}$ and
condition~\eqref{eq:cc24}
implies that $C^{-1} \|b\|_{\ell ^1_v} \leq \|f\|_{\mathcal{L}_\omega
} \leq \|b\|_{\ell ^1_v} $. For the convolution powers of $f$ we
obtain that 
\[
\norm[\mathcal{A}]{f^{(n)}}\leq \sum _{k_1, k_2 , \dots , k_n =1}^\infty
b_{k_1} b_{k_2} \dots b_{k_n} v (k_1 + k_2 + \dots k_n ) =
\norm[{\lw{\ZZ,}{v}}]{b^{(n)} } <\infty \, .
\]
By its  definition  the weight $v$ on $\ZZ$ satisfies the
\grs-condition, and $\lw{\ZZ,}{v}$ is symmetric
by~\cite[Lemma~3.2]{fgl}. Hence
\begin{eqnarray*}
  \sprad[\mathcal{A}]{f} &=& 
\lim _{n\to \infty } \norm[\mathcal{A}]{f^{(n)}} ^{1/n} \leq  
\lim _{n\to \infty }\norm[\lw{\ZZ,}{v}]{b^{(n)} }^{1/n}\\
&=&
\sprad[\lw{\ZZ,}{v}]{b}
\;=\;  \sprad[\lw{\ZZ}{}]{b}\;=\;\norm[\ell ^1]{b}\\
& =&  \norm[\mathcal{B}]{f} \, .
\end{eqnarray*}
So for all $k\in \NN $ we have
\[
\sprad[\mathcal{A}]{f}= \sprad[\mathcal{A}]{ f^{(k)}}^{1/k} \leq  
\norm[\mathcal{B}]{f^{(k)}}^{1/k} \, ,
\]
and by letting $k\to \infty $ we obtain the required inequality
$\sprad[\mathcal{A}]{f} \leq \sprad[\mathcal{B}]{f}$.
\end{proof}

Combining  Proposition~\ref{pr:1} and  Theorem~\ref{th:2},  we obtain
the symmetry of the weight\-ed convolution-dominated operator algebras
$CD(G,\omega )$.  
\begin{cor}
Under the same assumptions on $G$ and $\w$ as in Theorem~\ref{th:2}, the algebra $CD(G,\w)$ is symmetric.
\end{cor}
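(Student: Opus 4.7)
The plan is to transport the symmetry statement of Theorem~\ref{th:2} through the two weighted isometric $*$-isomorphisms
\[
CD(G,\w)\;\xleftarrow{\,R_\w\,}\;\mathcal{L}_\w := \lw{G,\w,}{\ell ^\infty(G),T}\;\xrightarrow{\,Q_\w\,}\;Q_\w(\mathcal{L}_\w)\subseteq \lw{G,}{\w}\projtensor B(\ell ^2(G))
\]
supplied by Proposition~\ref{pr:1}. This mirrors exactly the unweighted route through Proposition~\ref{p:2} and Corollary~\ref{cor:1}, where the symmetry of $CD(G)$ was read off from that of $\ell ^1(G)\projtensor B(\ell ^2(G))$.

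Concretely I would proceed in three short steps. First, by Theorem~\ref{th:2} the algebra $\mathcal{A}:=\lw{G,}{\w}\projtensor B(\ell ^2(G))$ is inverse-closed in $\mathcal{B}:=\ell ^1(G)\projtensor B(\ell ^2(G))$ and in particular symmetric, with the rigid symmetry and amenability of $G$ entering through the symmetry of the ambient algebra $\mathcal{B}$. Second, Proposition~\ref{pr:1} identifies $\mathcal{L}_\w$ via $Q_\w$ with a closed $*$-subalgebra of $\mathcal{A}$; since symmetry of a Banach $*$-algebra is inherited by closed $*$-subalgebras (the same principle already invoked in the proof of Corollary~\ref{cor:1}, using that the complement of a real closed set in $\mathbb{C}$ is connected so that the spectra in the subalgebra and in the ambient algebra coincide for elements of the form $a^\ast a$), it follows that $\mathcal{L}_\w$ is symmetric. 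Third, the same proposition provides an isometric $*$-isomorphism $R_\w:\mathcal{L}_\w\to CD(G,\w)$, and symmetry is trivially preserved under $*$-isomorphism because the condition $\spr{a^\ast a}\subseteq[0,\infty)$ is purely algebraic. Chaining these three steps yields the symmetry of $CD(G,\w)$.

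I do not expect any genuine obstacle here: by design the corollary is a formal consequence of the weighted structural isomorphisms of Proposition~\ref{pr:1} together with the weighted inverse-closedness of Theorem~\ref{th:2}. The only sanity check required is that $R_\w$ and $Q_\w$ --- being literal restrictions of $R$ and $Q$ to the smaller Banach space $\mathcal{L}_\w\subseteq\mathcal{L}$ --- intertwine twisted convolution, involution and the matrix action on $\ell ^2(G)$ exactly as in Propositions~\ref{p:1} and~\ref{p:2}, so no new algebraic computation is needed.
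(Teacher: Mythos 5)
Your proposal is correct and takes essentially the same route as the paper, whose entire proof is the one-line remark that the corollary follows by combining Proposition~\ref{pr:1} with Theorem~\ref{th:2}: symmetry of $\lw{G,}{\w}\projtensor B(\ell ^2(G))$ passes to the closed $*$-subalgebra $Q_\w(\mathcal{L}_\w)$ and then through the isometric $*$-isomorphism $R_\w$ to $CD(G,\w)$. The details you supply (spectral permanence for elements $a^{\ast}a$ in closed $*$-subalgebras, and invariance of symmetry under $*$-isomorphism) are exactly the steps the paper leaves implicit, matching the unweighted argument of Corollary~\ref{cor:1}.
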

Moreover,  the  Theorem~\ref{th:2} combined with Theorem~\ref{thm:invclosed} shows  that for $f\in
\mathcal{L}_{\w}$:
\begin{eqnarray*}
\sprad[\mathcal{L}_{\w}]{f}&\;=\;& \sprad[\mathcal{A}]{Q_{\w}(f)} \; =
\; \sprad[\mathcal{B}]{Q(f)}\\
&=&
\sprad[\mathcal{L}]{f}\;=\;
\sprad[B(\ell ^2(G))]{R(f)}= \sprad[B(\ell ^2(G))]{R_{\w}(f)}.
\end{eqnarray*}
Using  Hulanicki's Lemma in the form of \cite[Lemma 6.1 and 3.1]{fglm} we conclude as in the proof of 
Theorem~\ref{thm:invclosed} that $CD(G,\w)$ is inverse-closed in
$B(\ell ^2(G))$. 
\begin{cor}\label{cor:gw}
Impose the same assumptions on $G$ and $\w$ as in Theorem~\ref{th:2}. 

If $f\in
\mathcal{L}_{\w}$
is such that $R_{\w}(f)\in CD(G,\w)$ has an inverse in $B(\ell ^2(G))$ 
then $f^{-1}$
exists in $\mathcal{L}_{\w}$, and $R_{\w}(f^{-1})=R_{\w}(f)^{-1}$ 
is in $CD(G,\w)$.
\end{cor}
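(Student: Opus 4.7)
The strategy mirrors the proof of Theorem~\ref{thm:invclosed} almost verbatim, with $\mathcal{L}_{\w}$ in place of $\mathcal{L}$ and $R_{\w}$ in place of $R$: one first establishes the spectral radius identity $\sprad[\mathcal{L}_{\w}]{f} = \norm[B(\ell^2(G))]{R_{\w}(f)}$ for every hermitian $f \in \mathcal{L}_{\w}$, and then invokes Hulanicki's Lemma in the form of \cite[Lemmas~3.1 and~6.1]{fglm} to transfer invertibility from $B(\ell^2(G))$ back to $\mathcal{L}_{\w}$.

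Concretely, I would assemble the target identity via the chain already displayed in the paragraph preceding the corollary, namely
\[
\sprad[\mathcal{L}_{\w}]{f} = \sprad[\mathcal{A}]{Q_{\w}(f)} = \sprad[\mathcal{B}]{Q(f)} = \sprad[\mathcal{L}]{f} = \norm[B(\ell^2(G))]{R(f)} = \norm[B(\ell^2(G))]{R_{\w}(f)},
\]
where $\mathcal{A} = \ell^1(G,\w) \projtensor B(\ell^2(G))$ and $\mathcal{B} = \ell^1(G) \projtensor B(\ell^2(G))$. The first and third equalities are supplied by Proposition~\ref{pr:1}, since $Q_{\w}$ and $Q$ are isometric $*$-isomorphisms onto closed $*$-subalgebras; the middle equality is Theorem~\ref{th:2}, applied to the common element $Q_{\w}(f) = Q(f) \in \mathcal{A} \subseteq \mathcal{B}$; the fourth equality is the content of Theorem~\ref{thm:invclosed} (really Proposition~\ref{prop:normid}); the last is the tautological identification $R = R_{\w}$ on $\mathcal{L}_{\w}$.

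Once the spectral radius identity is in hand, the finish follows the template of Theorem~\ref{thm:invclosed}: Hulanicki's Lemma promotes the radius identity on hermitian elements to the full spectral equality $\spr[\mathcal{L}_{\w}]{f} = \spr[B(\ell^2(G))]{R_{\w}(f)}$ for every $f \in \mathcal{L}_{\w}$. In particular, invertibility of $R_{\w}(f)$ in $B(\ell^2(G))$ forces $0 \notin \spr[\mathcal{L}_{\w}]{f}$ and hence $f$ is invertible in $\mathcal{L}_{\w}$. Applying the $*$-isomorphism $R_{\w}$ to $f \cdot f^{-1} = 1$ then yields $R_{\w}(f^{-1}) = R_{\w}(f)^{-1}$, which lies in $CD(G,\w)$ by construction.

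The step I expect to require the most care is the first equality, $\sprad[\mathcal{L}_{\w}]{f} = \sprad[\mathcal{A}]{Q_{\w}(f)}$, and its analog for $Q$: the isometric $*$-isomorphism alone gives $\sprad[\mathcal{L}_{\w}]{f} = \sprad[Q_{\w}(\mathcal{L}_{\w})]{Q_{\w}(f)}$, and passing from the subalgebra $Q_{\w}(\mathcal{L}_{\w})$ to the ambient $\mathcal{A}$ uses the symmetry of both algebras together with Ptak's theorem to identify the largest $C^*$-seminorm of a hermitian element with its spectral radius. Since the symmetry of $\mathcal{L}_{\w}$ is inherited from that of $\mathcal{A}$, which is the content of Theorem~\ref{th:2}, all the symmetry ingredients are already in place and this is a routine invocation rather than a genuine obstacle.
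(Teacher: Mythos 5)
Your proposal is correct and follows the paper's own argument essentially verbatim: the paper proves Corollary~\ref{cor:gw} by exactly the displayed chain of spectral radius identities $\sprad[\mathcal{L}_{\w}]{f} = \sprad[\mathcal{A}]{Q_{\w}(f)} = \sprad[\mathcal{B}]{Q(f)} = \sprad[\mathcal{L}]{f} = \sprad[B(\ell ^2(G))]{R_{\w}(f)}$, obtained from Theorem~\ref{th:2} combined with Theorem~\ref{thm:invclosed}, and then concludes with Hulanicki's Lemma in the form of \cite[Lemmas~3.1 and 6.1]{fglm}, just as you do. The one step you flag as requiring care is in fact automatic: since $\sprad{a} = \lim_{n\to\infty} \norm{a^n}^{1/n}$ depends only on norms of powers, an isometric isomorphism preserves the spectral radius regardless of whether one computes in the image subalgebra or the ambient algebra, so no appeal to symmetry or Pt\'ak's theorem is needed there.
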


For a single matrix Corollary~\ref{cor:gw}  can be recast  once again
as a statement about the preservation of the off-diagonal decay by the
inverse. 

\begin{cor}
Impose the  same assumptions on $G$ and $\omega $ as in
Theorem~\ref{th:2}. 
If a matrix $A$ on $G$ satisfies the off-diagonal decay condition
$|A(x,y)| \leq a(xy^{-1}), \forall x,y\in G,$ for some $a\in \ell ^1(G,\omega)$ and $A$
is invertible on $\ell ^2(G)$, then there exists some $b\in \ell
^1(G,\omega)$, such that $|A^{-1}(x,y)| \leq b(xy^{-1}), \forall x,y
\in G$. 
\end{cor}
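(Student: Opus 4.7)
The plan is to deduce this corollary directly from Corollary~\ref{cor:gw} by unpacking what membership in $CD(G,\w)$ means at the level of matrix entries. There is essentially no new analysis involved; the work has all been done in Theorem~\ref{th:2} and Corollary~\ref{cor:gw}, and the present statement is just their concrete matrix reformulation.

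First, I would translate the hypothesis into an algebraic one. By the very definition of $CD(G,\w)$, the bound $|A(x,y)|\le a(xy^{-1})$ with $a\in \ell^1(G,\w)$ asserts precisely that $A\in CD(G,\w)$, with $\norm[\w]{A}\le \norm[\ell^1(G,\w)]{a}$. Proposition~\ref{pr:1} tells us that $R_{\w}:\mathcal{L}_{\w}\to CD(G,\w)$ is an isometric $*$-isomorphism, so there is a unique $f\in \mathcal{L}_{\w}=\lw{G,\w,}{\ell^{\infty}(G),T}$ with $R_{\w}(f)=A$.

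Second, I would apply the inverse-closedness result. The hypothesis that $A$ is invertible on $\ell^2(G)$ means exactly that $R_{\w}(f)$ is invertible in $B(\ell^2(G))$. Corollary~\ref{cor:gw} then gives $f^{-1}\in \mathcal{L}_{\w}$ and $R_{\w}(f^{-1})=R_{\w}(f)^{-1}=A^{-1}\in CD(G,\w)$.

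Finally, I would unpack the conclusion $A^{-1}\in CD(G,\w)$ back into a pointwise statement. By definition of $CD(G,\w)$, there exists $b\in \ell^1(G,\w)$ with $|A^{-1}(x,y)|\le b(xy^{-1})$ for all $x,y\in G$; concretely one may take $b(z):=\sup_{xy^{-1}=z}|A^{-1}(x,y)|$, whose $\ell^1(G,\w)$-norm equals $\norm[\w]{A^{-1}}$. There is no real obstacle here, as every substantive step (the symmetry of $\mathcal{L}_{\w}$, the spectral-radius identification via Hulanicki's lemma, and the equality of spectra) has already been carried out; the only mild point is to remember that the hypothesis and conclusion are pointwise bounds on matrix entries and not just statements about operator norms, but this is handled verbatim by the definition of $CD(G,\w)$ combined with the fact that $R_\w$ is surjective onto this algebra.
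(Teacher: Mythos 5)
Your proposal is correct and matches the paper exactly: the paper proves this corollary simply by observing that it is the matrix-level restatement of Corollary~\ref{cor:gw}, using the isometric $*$-isomorphism $R_{\omega}$ of Proposition~\ref{pr:1} to identify the entry-wise decay hypothesis with membership in $CD(G,\omega)$ and the conclusion with $A^{-1}\in CD(G,\omega)$. Your explicit choice $b(z)=\sup_{\{x,y:\,xy^{-1}=z\}}|A^{-1}(x,y)|$ is precisely the canonical dominating sequence the paper uses when defining the norm on $CD(G,\omega)$, so there is nothing to add.
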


\noindent \textsl{REMARK:} The   proof of Thm.~\ref{th:2} is similar to
the one of \cite[Thm.~3.3]{fgl}. However, the proof given there works only under an
additional assumption on the weight, such as \eqref{eq:cc24}, the result remains
correct as a consequence of the main result in \cite{fgl}. 

\bibliographystyle{abbrv}

\end{document}